 \newtheorem{thm}{Theorem}[section]
 \newtheorem{cor}[thm]{Corollary}
 \newtheorem{lem}[thm]{Lemma}
 \newtheorem{prop}[thm]{Proposition}
 \theoremstyle{definition}
 \newtheorem{defn}[thm]{Definition}
 \theoremstyle{remark}
 \newtheorem{rem}[thm]{Remark}
 \numberwithin{equation}{section}
\newcommand{\scal}[1]{\left<#1\right>}
\newcommand{\Hq}{\mathbb H}
\newcommand{\Sq}{\mathbb S}
\newcommand{\N}{\mathbb{N}}
\newcommand{\R}{\mathbb{R}}      
\newcommand{\C}{\mathbb{C}}
\title[On the quaternionic short-time Fourier and Segal-Bargmann transforms]{On the quaternionic short-time Fourier and Segal-Bargmann transforms}
\begin{document}
\date{}
\author{Antonino De Martino, Kamal Diki}
\maketitle
\begin{abstract}
In this paper, we study a special one dimensional quaternion short-time Fourier transform (QSTFT). Its construction is based on the slice hyperholomorphic Segal-Bargmann transform. We discuss some basic properties and prove different results on the QSTFT such as Moyal formula, reconstruction formula and Lieb's uncertainty principle. We provide also the reproducing kernel associated to the Gabor space considered in this setting.
 \end{abstract}

\noindent AMS Classification: 44A15, 30G35, 42C15

\noindent {\em Key words}: 1D quaternion Fourier transform, Segal-Bargmann transform, Short-time Fourier transform, Quaternions, Slice hyperholomorphic functions.

\section{Introduction}
Recently there has been an increased interest in the generalization of integral transforms to the quaternionic and Clifford settings. Such kind of transforms are widely studied, since they help in the analysis of vector-valued signals and images. In the survey \cite{CK} it is explained that some hypercomplex signals are useful tools for extracting intrinsically 1D-features from images. The reader can find other motivations for studying the extension of time frequency-analysis to quaternions in \cite{CK} and the references therein.
In the survey \cite{D} the author states that this research topic is based on three main approaches: the eigenfunction approach, the generalized roots of $-1$ approach and the spin group approach.

Using the second one a quaternionic short-time Fourier transform in dimension 2 is studied in \cite{BA}. In the paper \cite{D1} the same transform is defined in a Clifford setting for even dimension more than two.
In this paper we introduce an extension of the short-time Fourier transform in a quaternionic setting in dimension one.    

To this end,  we fix a property that relates the complex short-time Fourier transform and the complex Segal-Bargmann transform:
\begin{equation}
V_{ \varphi} f(x, \omega)=e^{-\pi i x \omega} Gf(\bar{z}) e^{ \frac{- \pi |z|^2}{2}},
\end{equation}
where $ V_{\varphi}$ is the complex short-time Fourier transform with respect to the Gaussian window $ \varphi$ (see \cite[Def. 3.1]{G})
and $Gf(z)$ denotes the complex version of the Segal-Bargmann transform according to \cite{G}. To achieve our aim we use the quaternionc analogue of the Segal-Bargmann transform studied in \cite{DG}. This integral transform is used also in \cite{DKS}  to study some quaternionic Hilbert spaces of Cauchy-Fueter regular functions. In \cite{CSS} and \cite{PSS} the authors introduce some special modules of monogenic functions of Bargmann-type in Clifford analysis.

In order to present our results, we adopt the following structure: in section 2 we collect some basic definitions and preliminaries.
In section 3, we prove some new properties of the quaternionic Segal-Bargmann transform. In particular we deal with an unitary property and give a characterization of the range of the Schwartz space. Moreover, we provide some calculations related to the position and the momentum operators.

In section 4, we give a brief overview of the 1D Fourier transform \cite{ELS} and show a Plancherel theorem in this framework.

In section 5, we define the 1D QSTFT in the following way
\begin{eqnarray*}
\mathcal{V}_{\varphi}f(x, \omega)= e^{-I \pi x \omega} \mathcal{B}_\Hq^S(f) \biggl(\frac{\bar{q}}{\sqrt{2}} \biggl) e^{-\frac{|q|^2 \pi}{2}}\\
\end{eqnarray*}

where $ \mathcal{B}_\Hq^S$ is the quaternionic Segal-Bargmann transform.

Using some properties of $ \mathcal{B}_\Hq^S$ we prove an isometric relation for the 1D QSTFT and a Moyal formula. These implies the following reconstruction formula
$$f(y)=\displaystyle 2^{-\frac{1}{4}} \int_{\R^2} e^{2\pi I \omega y}\mathcal{V}_{\varphi}f(x,\omega)e^{-\pi(y-x)^2}dx d\omega, \textbf{  } \forall y\in\R.$$
From this follows that the adjoint operator
defines a left inverse. Furthermore, it gives the possibility to write the 1D QSTFT using the reproducing kernel associated to the Gabor space
$$\mathcal{G}^{\varphi}_{\Hq}:=\lbrace{\mathcal{V}_\varphi f, \textbf{  } f\in L^2(\R,\Hq)}\rbrace.$$
Finally, we show that the 1D QSTFT follows a Lieb's uncertainty principle, some classical uncertainty principles for quaternionic linear operators in quaternionic Hilbert spaces were considered in \cite{XR}.
\section{Preliminaries}
In 2006 a new approach to quaternionic regular functions was introduced and then extensively studied in several directions, and it is nowadays widely developed \cite{ACS1,CSS1, CSS2, GSS}.
This new theory contains polynomials and power series with quaternionic coefficients in the right, contrary to the Fueter theory of regular functions defined by means of the Cauchy-Riemann Fueter differential operator. The meeting point between the two function theories comes from an idea of Fueter in the thirties and next developed later by Sce \cite{S} and by Qian \cite{Q}. This connection holds in any odd dimension (and in quaternionic case) and has been explained in \cite{CSS3} in the language of slice regular functions with values in the quaternions and slice monogenic functions with values in a Clifford algebra. The inverse map has been studied in \cite{CSS4} and still holds in any odd dimension. Moreover, the theory of slice regular functions have several applications in operator theory and in Mathematical Physics. The spectral theory of the S-spectrum is a natural tool for the formulation of quaternionic quantum mechanics and for the study of new classes of fractional diffusion problems, see \cite{CGK,CG},
and the references therein.
To make the paper self-contained,  we briefly revise here the basics of the slice regular functions. Let $ \mathbb{H}$ denote the quaternion algebra with its standard basis $\{1,i,j,k\}$ satisfying the  multiplication $i^2=j^2=k^2=ijk=-1$, $ij=-ji=k$, $jk=-kj=i$ and $ki=-ik=j$. For $ q \in \mathbb{H}$, we write $q=x_0+x_1i+x_2j+x_3k$ with $x_0,x_1,x_2, x_3 \in \mathbb{R}$. With respect to the quaternionic conjugate defined to be $ \bar{q}=x_0-x_1i-x_2j-x_3k= \hbox{Re}(q)- \hbox{Im}(q)$, we have $ \overline{pq}= \bar{q} \bar{p}$ for $p,q \in \mathbb{H}$. The modulus of $q$ is defined to be $|q|= \sqrt{q \bar{q}} =\sqrt{x_0^2+x_1^2+x_2^2+x_3^2}$. In particular we have $| \hbox{Im}q|= \sqrt{x_1^2+x_2^2+x_3^2}$. Let $ \mathbb{S}= \{q \in \mathbb{H}; q^2=-1\}$ be the unit sphere of imaginary units in $\mathbb{H}$. Note that any $ q \in \Hq \setminus \mathbb{R}$ can be written in a unique way as $q=x+Iy$ for some real numbers $x$ and $y>0$, and imaginary unit $I \in \mathbb{S}$. For every given $I \in \mathbb{S}$ we define $ \mathbb{C}_I= \mathbb{R}+ \mathbb{R}I.$ It is isomorphic to the complex plane $ \mathbb{C}$ so that it can be considered as a complex plane in $\Hq$ passing through 0,1 and I. Their union is the whole space of quaternions

$$ \mathbb{H}= \bigcup_{I \in \mathbb{S}} \mathbb{C}_I= \bigcup_{I \in \mathbb{S}} \mathbb{R}+ \mathbb{R}I$$

\begin{defn}
A real differentiable function $f: \Omega \to \mathbb{H}$, on a given domain $ \Omega \subset \mathbb{H}$, is said to be a (left) slice regular function if, for every $ I \in \mathbb{S}$, the restriction $f_I$ to $ \mathbb{C}_I$, with variable $q=x+Iy$, is holomorphic on $ \Omega_I:= \Omega \cap \mathbb{C}_I$, that is, it has continuous partial derivatives with respect to $x$ and $y$ and the function $ \overline{\partial_I}f: \Omega_I \to \mathbb{H}$ defined by
$$ \overline{\partial_I}f(x+Iy):= \frac{1}{2} \biggl( \frac{\partial }{\partial x}+I \frac{\partial}{\partial y} \biggl) f_I(x+yI)$$
vanishes identically on $ \Omega_I$. The set of slice regular functions will be denoted by $ \mathcal{SR}(\Omega)$.
\end{defn}
Characterization of slice regular functions on a ball $B=B(0,R)$ centred at the origin is given in \cite{GSS}. Namely we have
\begin{lem}
A given $ \mathbb{H}$- valued function$f$ is slice regular on $B(0,R) \subset \mathbb{H}$ if and only if it has a series expansion of the form
$$ f(q)= \sum_{n=0}^\infty \frac{q^n}{n!} \frac{\partial^n f}{\partial x^n}(0),$$
converging on $B(0,R)= \{q \in \mathbb{H}; |q|<R\}$.
\end{lem}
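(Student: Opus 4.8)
The statement is an equivalence, so the plan is to prove the two implications separately, starting with the easier ``if'' direction.

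First, for the ``if'' direction, I would show that every monomial $q \mapsto q^n$ is slice regular and that slice regularity survives the locally uniform convergence of a power series. Fixing $I \in \mathbb{S}$ and writing $q = x+Iy$ on $\mathbb{C}_I$, a direct computation gives
\[
\overline{\partial_I}(x+Iy)^n = \tfrac{1}{2}\big(n(x+Iy)^{n-1} + n(x+Iy)^{n-1}I^2\big) = \tfrac{1}{2}\,n(x+Iy)^{n-1}(1+I^2) = 0,
\]
using that $I$ commutes with $x+Iy$ and that $I^2=-1$. Hence every partial sum $s_N(q)=\sum_{n=0}^N \frac{q^n}{n!}\frac{\partial^n f}{\partial x^n}(0)$ is slice regular. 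Since a convergent quaternionic power series converges uniformly, together with its termwise derivatives, on compact subsets of $B(0,R)$, I may interchange $\overline{\partial_I}$ with the limit and conclude $\overline{\partial_I}f = \lim_N \overline{\partial_I}s_N = 0$ on each slice, so $f \in \mathcal{SR}(B(0,R))$.

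For the ``only if'' direction I would fix $I \in \mathbb{S}$, pick $J \in \mathbb{S}$ with $J \perp I$, so that $\mathbb{H} = \mathbb{C}_I \oplus \mathbb{C}_I J$, and split the restriction as $f_I = F_1 + F_2 J$ with $F_1, F_2 \colon \Omega_I \to \mathbb{C}_I$. The hypothesis $\overline{\partial_I}f=0$ is exactly the Cauchy--Riemann system making $F_1$ and $F_2$ holomorphic on the disc $\Omega_I = B(0,R)\cap \mathbb{C}_I$; expanding each in its classical Taylor series and recombining yields $f_I(x+Iy)=\sum_{n\ge 0}(x+Iy)^n s_n^I$ with $s_n^I \in \mathbb{H}$, convergent on $\Omega_I$.

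The crux, and the step I expect to be the main obstacle, is to show that the coefficients $s_n^I$ are independent of $I$ and equal $\frac{1}{n!}\frac{\partial^n f}{\partial x^n}(0)$. For a function holomorphic on $\Omega_I$, the $n$-th Taylor coefficient is $s_n^I = \frac{1}{n!}\frac{\partial^n f_I}{\partial x^n}(0)$; since $0$ lies on the real axis $\mathbb{R}$, which is contained in every slice $\mathbb{C}_I$, these iterated $x$-derivatives at $0$ depend only on the values of $f$ along the real segment $(-R,R)$ and are therefore common to all slices. Thus $s_n^I = \frac{1}{n!}\frac{\partial^n f}{\partial x^n}(0) =: a_n$, and the single series $\sum_{n\ge 0} q^n a_n = \sum_{n\ge 0}\frac{q^n}{n!}\frac{\partial^n f}{\partial x^n}(0)$ agrees with $f$ on every $\mathbb{C}_I$, hence on all of $B(0,R)$. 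Finally, convergence on $B(0,R)$ follows because on each slice disc of radius $R$ the holomorphic $f_I$ has a Taylor series converging throughout that disc, forcing $\limsup_n |a_n|^{1/n} \le 1/R$, so the quaternionic power series has radius of convergence at least $R$. The points needing care are the justification of termwise differentiation in the first part and, above all, the slice-independence of the coefficients, which rests on reducing everything to the shared real axis.
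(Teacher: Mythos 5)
The paper does not actually prove this lemma: it is quoted verbatim from the reference \cite{GSS} as a known characterization, so there is no in-paper argument to compare yours against. Judged on its own, your proof is correct and is essentially the standard one from the literature. The two key ingredients you use --- the splitting $f_I=F_1+F_2J$ with $F_1,F_2$ holomorphic on $\Omega_I$ (the ``splitting lemma''), and the observation that the Taylor coefficients $\frac{1}{n!}\partial_x^n f_I(0)$ are computed entirely from the restriction of $f$ to the real axis, which is shared by all slices --- are exactly the right ones, and the convergence estimate $\limsup_n|a_n|^{1/n}\le 1/R$ correctly transfers from each slice disc to the quaternionic ball. Two small remarks. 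First, in the ``if'' direction you can avoid justifying termwise application of $\overline{\partial_I}$ altogether: restricting $\sum_n q^n a_n$ to $\mathbb{C}_I$ and writing $a_n=\alpha_n+\beta_n J$ with $\alpha_n,\beta_n\in\mathbb{C}_I$ exhibits $f_I$ as $\bigl(\sum_n z^n\alpha_n\bigr)+\bigl(\sum_n z^n\beta_n\bigr)J$, a pair of ordinary convergent complex power series, which are holomorphic for free; this is cleaner than the uniform-convergence-of-derivatives route, though that route is also valid. Second, when you assert that $\overline{\partial_I}f_I=0$ is ``exactly'' the Cauchy--Riemann system for $F_1$ and $F_2$, the one-line verification worth recording is that $I\,(\partial_y F_2)\,J=(I\,\partial_y F_2)J$ and that $1,J$ are independent over $\mathbb{C}_I$, so $\overline{\partial_I}f_I=\overline{\partial}F_1+(\overline{\partial}F_2)J=0$ forces both summands to vanish. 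With those details made explicit the argument is complete.
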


\begin{defn}
Let $f: \Omega \to \mathbb{H}$ be a regular function. For each $I \in \mathbb{S}$, the $I$-derivative of $f$ is defined as
$$ \partial_I f(x+Iy):= \frac{1}{2} \biggl( \frac{\partial }{\partial x}-I \frac{\partial}{\partial y} \biggl) f_I(x+yI),$$
on $ \Omega_I$. The slice derivative of $f$ is the function $ \partial_Sf: \Omega \to \mathbb{H}$ defined by $ \partial_If$ on $ \Omega_I$, for all $I \in \mathbb{S}.$
\end{defn}

In all the paper we will make use of the Hilbert space $ L^2(\mathbb{R}, dx)=L^{2}(\mathbb{R}, \mathbb{H})$, consisting of all the square integrable $\Hq$-valued functions with respect to
$$\scal{\psi,\phi}=\displaystyle \int_\R \overline{\phi(t)}\psi(t)dt.$$
In \cite{ACS} the authors introduce the slice hyperholomorphic quaternionic Fock space $\mathcal{F}^{2,\nu}_{Slice}(\Hq)$,  defined for a given $I\in{\mathbb{S}}$  to be
$$\mathcal{F}^{2,\nu}_{Slice}(\Hq):=\lbrace{f\in{\mathcal{SR}(\Hq); \, \displaystyle  \int_{\C_I}\vert{f_I(p)}\vert^2 e^{-\nu\vert{p}\vert^2}d\lambda_I(p) <\infty}}\rbrace,$$
where $\nu>0, $ $f_I = f|_{\C_I}$ and $d\lambda_I(p)=dxdy$ for $p=x+yI$.
The right $\Hq$-vector space $\mathcal{F}^{2,\nu}_{Slice}(\Hq)$ is endowed with the inner product
\begin{equation}\label{spfg}
\scal{f,g}_{\mathcal{F}^{2,\nu}_{Slice}(\Hq)} = \int_{\C_I}\overline{g_I(q)}f_I(q)e^{-\nu\vert{q}\vert^2} d\lambda_I(q), \textbf{  } \forall f,g\in{\mathcal{F}^{2,\nu}_{Slice}(\Hq)}.
\end{equation}
The associated norm is given by
$$\Vert{f}\Vert_{\mathcal{F}^{2,\nu}_{Slice}(\Hq)}^2= \int_{\C_I}\vert{f_I(q)}\vert^2 e^{-\nu\vert{q}\vert^2}d\lambda_I(q).$$
This quaternionic Hilbert space does not depend on the choice of the imaginary unit $I$. An associated Segal-Bargmann transform was studied  in \cite{DG} by considering the kernel function obtained by means of generating function related to the normalized weighted Hermite functions $$\mathcal{A}_\Hq^S(q,x):=\displaystyle \sum_{k=0}^\infty f_k^\nu(q)\psi_k^\nu(x)=\left(\frac{\nu}{\pi}\right)^{\frac{3}{4}}e^{-\frac{\nu}{2}(q^2+x^2)+\nu\sqrt{2}qx}, \textbf{ } \forall (q,x)\in \Hq\times \R$$
where $\psi_k^\nu$ denote the normalized weighted Hermite functions:
$$\psi_k^\nu(x):= \frac{h_k^{\nu}(x)}{\| h_k^{\nu}(x) \|_{L^{2}(\mathbb{R}, \mathbb{H})}} = \frac{(-1)^{k} e^{\frac{\nu}{2} x^2} \frac{d^k}{dx^k} \bigl( e^{-\nu x^2} \bigl)}{2^{k/2} \nu^{k/2} (k!)^{1/2} \pi^{1/4} \nu^{-1/4}},$$
and
$$\displaystyle f_k^\nu(q):= \frac{e_k(q)}{\| e_k(q) \|_{\mathcal{F}^{2,\nu}_{Slice}(\Hq)}}=\frac{q^k}{||q^k||_{\mathcal{F}^{2,\nu}_{Slice}(\Hq)}}=\sqrt{\frac{\nu^{k+1}}{\pi k!}}q^k, \textbf{  } \forall k \geq 0, $$ are the normalized quaternionic monomials which constitute an orthonormal  basis of $\mathcal{F}^{2,\nu}_{Slice}(\Hq)$.
Then, for any quaternionic valued function $\varphi$ in $L^2(\R,\Hq)$ the slice hyperholomorphic Segal-Bargmann transform is defined by
\begin{equation}
\label{Barg}
\displaystyle \mathcal{B}_\Hq^S(\varphi)(q)=\int_\R \mathcal{A}_\Hq^S(q,x) \varphi(x)dx.
\end{equation}
In particular, most of our calculations later will be with a fixed parameter even $\nu=1$ or $\nu=2\pi$.

\section{Further properties of the quaternionic Segal-Bargmann transform}
In this section we prove some new properties of the quaternionic Segal-Bargmann transform. We start from an unitary property which is not found in literature in the following explicit form.
\begin{prop}
Let $f,g \in L^{2}(\mathbb{R}, \mathbb{H})$. Then, we have
\begin{equation}
\label{NL}
\langle \mathcal{B}_\Hq^S(f), \mathcal{B}_\Hq^S(g) \rangle_{\mathcal{F}^{2,\nu}_{Slice}(\mathbb{H})}= \langle f,g \rangle_{L^{2}(\mathbb{R}, \mathbb{H})}.
\end{equation}
\end{prop}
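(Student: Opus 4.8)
The plan is to exploit the fact that the kernel $\mathcal{A}_\Hq^S$ is assembled from two orthonormal systems and that the transform merely carries one onto the other. First I would insert the series expansion of the kernel into the definition \eqref{Barg} and interchange sum and integral to get, for $\varphi\in L^2(\R,\Hq)$,
\[
\mathcal{B}_\Hq^S(\varphi)(q)=\sum_{k=0}^{\infty}f_k^\nu(q)\int_\R \psi_k^\nu(x)\varphi(x)\,dx .
\]
Since each $\psi_k^\nu$ is real-valued, it is central in $\Hq$ and equals its own conjugate, so the $k$-th integral is exactly $\scal{\varphi,\psi_k^\nu}_{L^2(\R,\Hq)}$, sitting on the right of $f_k^\nu(q)$. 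Taking $\varphi=\psi_n^\nu$ and using orthonormality of the Hermite system then gives $\mathcal{B}_\Hq^S(\psi_n^\nu)=f_n^\nu$: the transform sends the orthonormal basis $\{\psi_k^\nu\}$ of $L^2(\R,\Hq)$ onto the orthonormal basis $\{f_k^\nu\}$ of $\mathcal{F}^{2,\nu}_{Slice}(\Hq)$.

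From here I would finish by Parseval's identity in both right quaternionic Hilbert spaces. Setting $a_k=\scal{f,\psi_k^\nu}_{L^2(\R,\Hq)}$ and $b_k=\scal{g,\psi_k^\nu}_{L^2(\R,\Hq)}$, the display above expresses $\mathcal{B}_\Hq^S(f)$ and $\mathcal{B}_\Hq^S(g)$ in the basis $\{f_k^\nu\}$ with right coefficients $a_k,b_k$. Expanding the Fock inner product and using $\scal{f_k^\nu,f_l^\nu}=\delta_{kl}$, while being careful that the convention $\scal{u,v}=\int\overline{v}u$ places the conjugate of the second coefficient on the left, yields
\[
\scal{\mathcal{B}_\Hq^S(f),\mathcal{B}_\Hq^S(g)}_{\mathcal{F}^{2,\nu}_{Slice}(\Hq)}=\sum_{k=0}^{\infty}\overline{b_k}\,a_k=\sum_{k=0}^{\infty}\overline{\scal{g,\psi_k^\nu}}\,\scal{f,\psi_k^\nu}.
\]
The identical Parseval expansion of $\scal{f,g}_{L^2(\R,\Hq)}$ in the basis $\{\psi_k^\nu\}$ reproduces exactly this right-hand side, which is \eqref{NL}.

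The step I expect to be the main obstacle is the analytic justification of the sum-integral interchange, together with the non-commutative bookkeeping. For the interchange I would either invoke uniform convergence of the generating series defining $\mathcal{A}_\Hq^S$ on the relevant slice, or argue by density: establish $\mathcal{B}_\Hq^S(\psi_n^\nu)=f_n^\nu$ on the dense linear span of the Hermite functions, observe that the computation shows $\mathcal{B}_\Hq^S$ is a right-linear isometry there, and extend \eqref{NL} to all of $L^2(\R,\Hq)$ by continuity and polarization. The quaternionic care amounts to keeping every scalar coefficient on the correct (right) side of the basis vectors and tracking the conjugation created by the second slot of the inner product; because the $\psi_k^\nu$ are real-valued, no ordering ambiguity occurs inside the integrals themselves.
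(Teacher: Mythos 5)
Your argument is correct and follows essentially the same route as the paper: both proofs reduce the identity to the fact that the transform carries the normalized Hermite basis of $L^2(\R,\Hq)$ onto the normalized monomial basis of $\mathcal{F}^{2,\nu}_{Slice}(\Hq)$, and then compare the two inner products termwise with the quaternionic coefficients kept on the right and the conjugate on the left. The only cosmetic difference is that you derive the basis-mapping property directly from the generating-series form of the kernel, whereas the paper imports the corresponding expansion of $\mathcal{B}_\Hq^S(f)$ from the reference \cite{DG} and works with the unnormalized functions $h_k^\nu$ and $e_k$.
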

\begin{proof}
Any $f,g \in  L^{2}(\mathbb{R}, \mathbb{H})$ can be expanded as
$$ f(x)= \sum_{k \geq 0} h_{k}^{\nu}(x) \alpha _k,$$
$$ g(x)= \sum_{k \geq 0} h_{k}^{\nu}(x) \beta_k,$$
where $(\alpha _k)_{k \in \mathbb{N}}, (\beta _k)_{k \in \mathbb{N}} \subset \mathbb{H}$.
\begin{eqnarray}
\label{W3}
\nonumber
\langle f,g \rangle_{L^{2}(\mathbb{R}, \mathbb{H})} &=& \int_{\mathbb{R}} \overline{g(x)} f(x) \, dx= \sum_{k \geq 0} \int_{\mathbb{R}} \overline{h_{k}^{\nu}(x) \beta_k} h_{k}^{\nu}(x) \alpha _k \, dx \\
&=& \sum_{k \geq 0} \overline{\beta_k} \biggl( \int_{\mathbb{R}} \overline{h_{k}^{\nu}(x)} h_{k}^{\nu}(x) \, dx \biggl) \alpha_k \\
&=& \sum_{k \geq 0} \| h_{k}^{\nu}(x) \|_{L^{2}(\mathbb{R}, \mathbb{H})}^2  \overline{\beta_k} \alpha_k. \nonumber
\end{eqnarray}
On the other way, since
$$ \langle f, h_k^\nu \rangle_{L^{2}(\mathbb{R}, \mathbb{H})}= \sum_{j \geq 0} \biggl( \int_{\mathbb{R}} \overline{h_{k}^{\nu}(x)} h_{j}^{\nu}(x) \, dx \biggl) \alpha_j = \| h_{k}^{\nu}(x) \|_{L^{2}(\mathbb{R}, \mathbb{H})}^2 \alpha_k.$$
We have by \cite{DG}
\begin{eqnarray}
\label{W0}
\mathcal{B}_\Hq^S(f)(q) &=& \sum_{k \geq 0} e_k(q) \frac{\langle f, h_{k}^{\nu} \rangle_{L^{2}(\mathbb{R}, \mathbb{H})}}{ \| h_k^\nu(x) \|_{L^{2}(\mathbb{R}, \mathbb{H})} \| e_k(q) \|_{\mathcal{F}^{2, \nu}_{Slice}}} \\ \nonumber
&=& \sum_{k \geq 0} e_k(q) \frac{\| h_k^\nu(x) \|_2^2 }{ \| h_k^\nu(x) \|_{L^{2}(\mathbb{R}, \mathbb{H})} \| e_k(q) \|_{\mathcal{F}^{2, \nu}_{Slice}}}\alpha_k\\ \nonumber
&=& \sum_{k \geq 0} e_k(q) \frac{\| h_k^\nu(x) \|_{L^{2}(\mathbb{R}, \mathbb{H})} }{ \| e_k(q) \|_{\mathcal{F}^{2, \nu}_{Slice}}} \alpha_k.	
\end{eqnarray}
Using the same calculus we obtain
\begin{equation}
\label{W1}
\overline{\mathcal{B}_\Hq^S(g)(q)}= \sum_{k \geq 0} \frac{\| h_k^\nu(x) \|_{L^2 (\mathbb{R}, \mathbb{H})}}{\| e_k(q) \|_{\mathcal{F}^{2, \nu}_{Slice}}} \overline{e_k(q) \beta_k}.
\end{equation}
By putting together \eqref{W0} and \eqref{W1} we obtain
\begin{eqnarray}
\nonumber
\label{W2}
\langle \mathcal{B}_\Hq^S(f), \mathcal{B}_\Hq^S(g) \rangle_{\mathcal{F}^{2,\nu}_{Slice}(\mathbb{H})} &=& \sum_{k \geq 0} \int_{\mathbb{C}_I} \| h_k^\nu(x) \|_{L^{2}(\mathbb{R}, \mathbb{H})}^2 \overline{\beta_k} \frac{\overline{e_k(q)}}{\| e_k(q) \|_{\mathcal{F}^{2, \nu}_{Slice}}} \cdot \\ \nonumber
&& \cdot \frac{e_k(q)}{\| e_k(q) \|_{\mathcal{F}^{2, \nu}_{Slice}}} \alpha_k e^{- \nu |q|^2} \, d \lambda_I(q)\\ \nonumber
&=& \sum_{k \geq 0} \| h_k^\nu(x) \|_{L^{2}(\mathbb{R}, \mathbb{H})}^2 \overline{\beta_k} \biggl( \int_{\mathbb{C}_I} \frac{\overline{e_k(q)}}{\| e_k(q) \|_{\mathcal{F}^{2,  \nu}_{Slice}}} \cdot \\ \nonumber
&& \cdot \frac{e_k(q)}{\| e_k(q) \|_{\mathcal{F}^{2, \nu}_{Slice}}} e^{- \nu |q|^2}  \, d \lambda_I(q) \biggl) \alpha_k\\ \nonumber
&=& \sum_{k \geq 0} \| h_k^\nu(x) \|_{L^{2}(\mathbb{R}, \mathbb{H})}^2 \overline{\beta_k}  \frac{1}{\| e_k(q) \|_{\mathcal{F}^{2, \nu}_{Slice}}^2} \cdot \\ \nonumber
&& \cdot \biggl( \int_{\mathbb{C}_I} \overline{e_k(q)}e_k(q) e^{- \nu |q|^2}  \, d \lambda_I(q) \biggl) \alpha_k\\  \nonumber
&=& \sum_{k \geq 0} \| h_k^\nu(x) \|_{L^{2}(\mathbb{R}, \mathbb{H})}^2 \overline{\beta_k}  \frac{1}{\| e_k(q) \|_{\mathcal{F}^{2,  \nu}_{Slice}}^2} \| e_k(q) \|_{\mathcal{F}^{2,  \nu}_{Slice}}^2  \alpha_k\\
&=& \sum_{k \geq 0} \| h_k^\nu(x) \|_{L^{2}(\mathbb{R}, \mathbb{H})}^2 \overline{\beta_k} \alpha_k
\end{eqnarray}
Finally, since \eqref{W3} and \eqref{W2} are equal we obtain the thesis.
\end{proof}
\begin{rem}
If $f=g$ in \eqref{NL} we have that the quaternionic Segal-Bargmann transform realizes an isometry from $L^2(\mathbb{R}, \mathbb{H})$ onto the slice hyperholomorphic Bargmann-Fock space $ \mathcal{F}^{2, \nu}_{Slice}(\mathbb{H})$, as proved in a different way in \cite[Thm. 4.6]{DG}
\end{rem}

\subsection{Range of the Schwartz space and some operators}
We characterize the range of the Schwartz space under the Segal-Bargmann transform with parameter $\nu=1$ in the slice hyperholomorphic setting of quaternions. We consider also some equivalence relations related to the position and momentum operators in this setting.
The quaternionic Schwartz space on the real line that we are considering in this framework is defined by
$$\mathcal{S}_\Hq(\R):=\lbrace{\psi:\R\longrightarrow\Hq \textbf{ }: \text{  } \sup_{x\in\R}\left|x^\alpha\frac{d^\beta}{dx^\beta}(\psi)(x)\right|<\infty},\textbf{ } \forall \alpha,\beta\in \mathbb{N} \rbrace.$$
For $I\in\Sq$, the classical Schwartz space is given by
$$\mathcal{S}_{\C_I}(\R):=\lbrace{\varphi:\R\longrightarrow\C_I;\textbf{ } : \text{  } \sup_{x\in\R}\left|x^\alpha\frac{d^\beta}{dx^\beta}(\varphi)(x)\right|<\infty, \textbf{ } \forall \alpha,\beta\in \mathbb{N} }\rbrace .$$
Clearly, we have that $$\mathcal{S}_{\C_I}(\R)\subset\mathcal{S}_\Hq(\R)\subset L^2_\Hq(\R).$$ Moreover, we prove the following
\begin{lem} \label{L1} Let $\psi:x\longmapsto \psi(x)$ be a quaternionic valued function. Let $I,J\in\mathbb{
S}$ be such that $I \perp J$. Then, $\psi\in\mathcal{S}_\Hq(\R)$ if and only if there exist $\varphi_1,\varphi_2\in \mathcal{S}_{\C_I}(\R)$ such that we have $$\psi(x)=\varphi_1(x)+\varphi_2(x)J, \textbf{ } \forall x\in\R.$$

\end{lem}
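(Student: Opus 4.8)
The plan is to realise the statement as the standard \emph{splitting} of a quaternionic function relative to the orthogonal decomposition $\Hq = \C_I \oplus \C_I J$, and then to observe that this splitting is compatible with all the seminorms defining the two Schwartz spaces. Since $I \perp J$, the set $\{1, I, J, IJ\}$ is an orthonormal basis of $\Hq$ as a real vector space, and every quaternion $q = a + bI + cJ + dIJ$ regroups uniquely as $q = (a+bI) + (c+dI)J$ with $a+bI,\, c+dI \in \C_I$. Applying this pointwise to $\psi$ produces, for each $x\in\R$, uniquely determined $\varphi_1(x), \varphi_2(x) \in \C_I$ with $\psi(x) = \varphi_1(x) + \varphi_2(x)J$, so the only issue is whether the regularity and decay of $\psi$ transfer to the two components and back.

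First I would make the two components explicit through the conjugation $q \mapsto -IqI$, which fixes $1$ and $I$ and negates $J$ and $IJ$. This yields the closed formulas $\varphi_1(x) = \tfrac{1}{2}\bigl(\psi(x) - I\psi(x)I\bigr)$ and $\varphi_2(x) = -\tfrac{1}{2}\bigl(\psi(x) + I\psi(x)I\bigr)J$, exhibiting $\varphi_1,\varphi_2$ as images of $\psi$ under left/right multiplication by the fixed units $I, J$ combined with real-linear operations. Consequently $\varphi_1,\varphi_2$ are $\C_I$-valued, they are exactly as smooth as $\psi$, and differentiation in $x$ commutes with the splitting because $I$ and $J$ are constant: $\frac{d^\beta}{dx^\beta}\psi = \frac{d^\beta}{dx^\beta}\varphi_1 + \bigl(\frac{d^\beta}{dx^\beta}\varphi_2\bigr)J$, with each $\frac{d^\beta}{dx^\beta}\varphi_i$ again $\C_I$-valued.

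The heart of the argument is a norm identity. Because $\{1, I, J, IJ\}$ is orthonormal and multiplication by the real scalar $x^\alpha$ preserves the decomposition, for every $\alpha,\beta\in\N$ one has the pointwise Pythagorean identity $\bigl|x^\alpha \frac{d^\beta}{dx^\beta}\psi(x)\bigr|^2 = \bigl|x^\alpha \frac{d^\beta}{dx^\beta}\varphi_1(x)\bigr|^2 + \bigl|x^\alpha \frac{d^\beta}{dx^\beta}\varphi_2(x)\bigr|^2$. Taking the supremum over $x\in\R$ shows that the $(\alpha,\beta)$-seminorm of $\psi$ is finite if and only if the corresponding seminorms of $\varphi_1$ and $\varphi_2$ are both finite. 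Ranging over all $\alpha,\beta$ gives both implications simultaneously: $\psi \in \mathcal{S}_\Hq(\R)$ forces $\varphi_1,\varphi_2 \in \mathcal{S}_{\C_I}(\R)$, and conversely $\varphi_1,\varphi_2 \in \mathcal{S}_{\C_I}(\R)$ forces $\psi = \varphi_1 + \varphi_2 J \in \mathcal{S}_\Hq(\R)$.

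I expect the only delicate points to be bookkeeping rather than substance: checking on the four basis elements that the projection formulas really land in $\C_I$, and confirming that right multiplication by the constant $J$ affects neither differentiation in $x$ nor the modulus, so that the orthogonal identity is available at every derivative order. Once these are settled the equivalence of the two families of seminorms is immediate, and no estimate beyond the elementary $\max(|\varphi_1|,|\varphi_2|) \le |\psi| \le |\varphi_1| + |\varphi_2|$ is required.
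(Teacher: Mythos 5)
Your argument is correct and follows essentially the same route as the paper: split $\psi$ via the symplectic decomposition $\Hq=\C_I\oplus\C_I J$ and use the pointwise Pythagorean identity $\bigl|x^\alpha\frac{d^\beta}{dx^\beta}\psi\bigr|^2=\bigl|x^\alpha\frac{d^\beta}{dx^\beta}\varphi_1\bigr|^2+\bigl|x^\alpha\frac{d^\beta}{dx^\beta}\varphi_2\bigr|^2$ to transfer each Schwartz seminorm in both directions. The only difference is that you make explicit the projection formulas $\varphi_1=\tfrac12(\psi-I\psi I)$ and $\varphi_2=-\tfrac12(\psi+I\psi I)J$, which the paper leaves implicit; this is a harmless (and welcome) amplification, not a different method.
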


\begin{proof}
Let $\psi\in \mathcal{S}_\Hq(\R) $. Then, we can write $$\psi(x)=\varphi_1(x)+\varphi_2(x)J,$$ where $\varphi_1$ and $\varphi_2$ are $\C_I-$valued functions. Note that for all $\alpha,\beta\in \N$ we have $$\left|x^\alpha\frac{d^\beta}{dx^\beta}(\psi)(x)\right|^2=\left|x^\alpha\frac{d^\beta}{dx^\beta}(\varphi_1)(x)\right|^2+\left|x^\alpha\frac{d^\beta}{dx^\beta}(\varphi_2)(x)\right|^2.$$ In particular, this implies that $\psi\in \mathcal{S}_\Hq(\R) $ if and only if $\varphi_1,\varphi_2\in \mathcal{S}_{\C_I}(\R)$.
\end{proof}

Let us now denote by $\mathcal{SF}(\mathbb{H})$ the range of $\mathcal{S}_\Hq(\R)$ under the quaternionic Segal-Bargmann transform $\mathcal{B}_{\Hq}^S.$ Therefore, we have the following characterization of $\mathcal{SF}(\mathbb{H})$:
\begin{thm}
A function $f(q)=\displaystyle\sum_{k=0}^\infty q^kc_k$ belongs to $\mathcal{SF}(\mathbb{H})$ if and only if $$\sup_{k\in\mathbb{N}}|c_k|k^p\sqrt{k!}<\infty, \forall p >0.$$
i.e,
$$\mathcal{SF}(\mathbb{H})=\lbrace{\displaystyle\sum_{k=0}^\infty q^k c_k, \textbf{ } c_k\in\Hq \text{ and  }\sup_{k\in\mathbb{N}}|c_k|k^p\sqrt{k!}<\infty, \forall p >0}\rbrace.$$
\end{thm}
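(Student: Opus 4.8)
The plan is to characterize membership in $\mathcal{SF}(\mathbb{H})$ by translating the Schwartz decay conditions on $\psi$ into growth/decay conditions on its Hermite coefficients, and then tracking how the Segal-Bargmann transform sends those coefficients to the Taylor coefficients $c_k$ of $f$. The key observation, which I would establish first, is the classical fact that a function $\psi$ belongs to the Schwartz space if and only if its Hermite coefficients decay faster than any polynomial; that is, writing $\psi=\sum_{k\geq 0}h_k^\nu\alpha_k$ (with $\nu=1$), one has $\psi\in\mathcal{S}_\Hq(\R)$ if and only if $\sup_{k}|\alpha_k|k^p<\infty$ for all $p>0$. I would reduce this to the known complex scalar statement by invoking Lemma \ref{L1}: splitting $\psi=\varphi_1+\varphi_2 J$ with $\varphi_1,\varphi_2\in\mathcal{S}_{\C_I}(\R)$, and noting that the Hermite coefficients split correspondingly, so that $|\alpha_k|^2$ is the sum of the squared moduli of the two complex coefficient sequences. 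Thus the rapid-decay characterization for $\psi$ follows from the two classical complex ones.

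Next I would compute explicitly how $\mathcal{B}_\Hq^S$ acts on the coefficient sequence. From formula \eqref{W0} in the previous proposition, with the expansion $\psi=\sum_k h_k^\nu\alpha_k$, we have
\begin{equation*}
\mathcal{B}_\Hq^S(\psi)(q)=\sum_{k\geq 0}e_k(q)\frac{\|h_k^\nu\|_{L^2(\R,\Hq)}}{\|e_k(q)\|_{\mathcal{F}^{2,\nu}_{Slice}}}\alpha_k=\sum_{k\geq 0}q^k c_k,
\end{equation*}
so that $c_k=\dfrac{\|h_k^\nu\|_{L^2(\R,\Hq)}}{\|e_k(q)\|_{\mathcal{F}^{2,\nu}_{Slice}}}\,\alpha_k$, using $e_k(q)=q^k$. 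I would then substitute the explicit norms recorded in the preliminaries: from the definition of $\psi_k^\nu$ one reads off $\|h_k^\nu\|_{L^2(\R,\Hq)}=2^{k/2}\nu^{k/2}(k!)^{1/2}\pi^{1/4}\nu^{-1/4}$, and from $f_k^\nu(q)=\sqrt{\nu^{k+1}/(\pi k!)}\,q^k$ one has $\|e_k(q)\|_{\mathcal{F}^{2,\nu}_{Slice}}=\sqrt{\pi k!/\nu^{k+1}}$. Taking the ratio with $\nu=1$ collapses everything to $|c_k|=C\,|\alpha_k|/\sqrt{k!}$ for an absolute constant $C$ (after the powers of $\nu$ and $\pi$ cancel up to a constant), equivalently $|\alpha_k|=C^{-1}|c_k|\sqrt{k!}$.

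Combining the two steps finishes the argument: $\psi\in\mathcal{S}_\Hq(\R)$ is equivalent to $\sup_k|\alpha_k|k^p<\infty$ for all $p$, and substituting $|\alpha_k|\sim|c_k|\sqrt{k!}$ turns this into $\sup_k|c_k|\sqrt{k!}\,k^p<\infty$ for all $p$, which is exactly the stated condition. The only point requiring genuine care — and the step I expect to be the main obstacle — is the first one, the rapid-decay characterization of Schwartz functions via Hermite coefficients. The cleanest route is to recall that the Hermite functions are eigenfunctions of the harmonic oscillator $H=-\frac{d^2}{dx^2}+x^2$ with eigenvalues proportional to $k$, that $H$ preserves $\mathcal{S}(\R)$, and that the Schwartz topology is generated by the seminorms $\|H^m\psi\|_{L^2}$; then $\psi\in\mathcal{S}(\R)$ iff $\sup_k|\alpha_k|k^p<\infty$ for all $p$ follows by expressing these seminorms through the coefficients. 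I would state this as a known fact for the complex case and carefully verify that the quaternionic structure only enters through the harmless coefficient split of Lemma \ref{L1}, so that no genuinely new analysis beyond the classical Hermite theory is needed.
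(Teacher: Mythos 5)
Your strategy is viable and lands on the stated characterization, but it takes a genuinely different route from the paper's. The paper also splits $\psi=\varphi_1+\varphi_2J$ via Lemma \ref{L1}, but then restricts $\mathcal{B}_\Hq^S$ to the slice $\C_I$, identifies the two components with classical complex Bargmann transforms, and quotes wholesale the known description (Neretin's book, cited as \cite{N}) of the Bargmann image of the complex Schwartz space, namely $\sup_n|a_n|n^p\sqrt{n!}<\infty$ for all $p>0$; the quaternionic statement then follows from $c_n=a_n+b_nJ$, the triangle inequality, and slice hyperholomorphic extension. You instead open that black box: you derive the complex ingredient from the Hermite-coefficient characterization of the Schwartz space (via the harmonic oscillator) together with the diagonal action of the Segal--Bargmann transform on the Hermite basis, using Lemma \ref{L1} only to reduce the Hermite characterization itself to the scalar case. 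Your version is more self-contained and exposes the mechanism; the paper's is shorter because the analytic content is outsourced to a citation. Note also that the paper's written proof only spells out one inclusion, whereas your two-sided equivalences handle both directions at once.

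One concrete warning about your middle step: with the paper's conventions, $h_k^{\nu}$ is \emph{not} normalized, and $\Vert h_k^{1}\Vert_{L^2(\R,\Hq)}=2^{k/2}\sqrt{k!}\,\pi^{1/4}$ while $\Vert q^k\Vert_{\mathcal{F}^{2,1}_{Slice}}=\sqrt{\pi k!}$. Consequently the ratio in your formula $c_k=\bigl(\Vert h_k^{1}\Vert/\Vert e_k\Vert\bigr)\alpha_k$ equals $2^{k/2}\pi^{-1/4}$, not $C/\sqrt{k!}$ as you claim; and the Hermite characterization of $\mathcal{S}_\Hq(\R)$ reads $\sup_k|\alpha_k|\,\Vert h_k^{1}\Vert\,k^p<\infty$ for coefficients taken against the unnormalized $h_k^{1}$, not $\sup_k|\alpha_k|k^p<\infty$. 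Each of your two displayed intermediate claims is therefore off by the factor $\Vert h_k^{1}\Vert\sim 2^{k/2}\sqrt{k!}$, and your final conclusion is saved only because the two errors cancel in the product. The clean fix is to work throughout with the orthonormal basis $\psi_k^{1}=h_k^{1}/\Vert h_k^{1}\Vert$, for which both of your claims become literally true: $\psi=\sum_k\psi_k^{1}\alpha_k$ is Schwartz iff $\sup_k|\alpha_k|k^p<\infty$ for all $p$, and $c_k=\alpha_k/\sqrt{\pi k!}$. With that normalization made consistent, the argument is correct.
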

\begin{proof}
Let $f\in\mathcal{SF}(\mathbb{H}),$ then by definition $f=\mathcal{B}_\Hq^S \psi$ where $\psi\in\mathcal{S}_\Hq(\R)$. Let $I,J\in \Sq$,  be such that $I \perp J$. Thus, Lemma \ref{L1} implies that  $$\psi(x)=\varphi_1(x)+\varphi_2(x)J,$$
where $\varphi_1,\varphi_2\in \mathcal{S}_{\C_I}(\R)$. Therefore, we have $$\mathcal{B}_\Hq^S(\psi)(q)=\mathcal{B}^S_\Hq(\varphi_1)(q)+\mathcal{B}^S_\Hq(\varphi_2)(q)J.$$
Then, we take the restriction to the complex plane $\C_I$ and get:
$$\mathcal{B}_\Hq^S(\psi)(z)=\mathcal{B}_{\C_I}(\varphi_1)(z)+\mathcal{B}_{\C_I}(\varphi_2)(z)J, \textbf{  }\forall z\in\C_I,$$
where the complex Bargmann transform (see \cite{B}) is given by

$$\displaystyle\mathcal{B}_{\C_I}(\varphi_l)(z)=\frac{1}{\pi^{\frac{3}{4}}}\int_\R e^{-\frac{1}{2}(z^2+x^2)+\sqrt{2}zx} \varphi_l(x)dx, \textbf{  } l=1,2.$$
In particular, we set $f_I:=\mathcal{B}_\Hq^S(\psi)$, $f_1:=\mathcal{B}_{\C_I}(\varphi_1)$ and $f_2:=\mathcal{B}_{\C_I}(\varphi_2)$.
 Then, we have $f_1,f_2\in\mathcal{SF}(\mathbb{C}_I) $. Thus, by applying the classical result in complex analysis, see \cite{N} we have $$f_1(z)=\displaystyle\sum_{n=0}^{\infty}a_nz^n \text{ and } f_2(z)=\sum_{n=0}^\infty b_nz^n, \text{  } \forall z\in\C_I.$$ Moreover, for all $p>0$ the following conditions hold $$\sup_{n\in\N}|a_n|n^p\sqrt{n!}<\infty \text{ and }\sup_{n\in\N}|b_n|n^p\sqrt{n!}<\infty. $$ In particular, we have then $$f_I(z)=\displaystyle \sum_{n=0}^\infty a_nz^n+(\sum_{n=0}^\infty a_nz^n)J, \text{  }\forall z\in\C_I.$$

Therefore, $$f_I(z)=\displaystyle \sum_{n=0}^{\infty}z^nc_n \text{ with } c_n=a_n+b_nJ, \text{ for all } z\in\C_I.$$

Thus, by taking the slice hyperholomorphic extension we get $$f(q)=\displaystyle \sum_{n=0}^{\infty}q^nc_n, \textbf{ }\forall q\in\Hq.$$ Moreover, note that $c_n=a_n+b_nJ, n\in \N$. Then, $|c_n|\leq |a_n|+|b_n|$, $\forall n\in \N$. Thus, for all $p>0$, we have $$\sup_{n\in\N}|c_n|n^p\sqrt{n!}\leq\sup_{n\in\N}|a_n|n^p\sqrt{n!}+\sup_{n\in\N}|b_n|n^p\sqrt{n!}<\infty.$$
Finally, we conclude that

$$\mathcal{SF}(\mathbb{H})=\lbrace{f(q)=\displaystyle\sum_{k=0}^\infty q^k c_k, \textbf{ } c_k\in\Hq \text{ and  }\sup_{k\in\mathbb{N}}|c_k|k^p\sqrt{k!}<\infty, \forall p>0}\rbrace.$$
\end{proof}

Now, let us consider  on $L^2(\R,\Hq)=L^2_\Hq(\R)$ the position and momentum operators defined by $$X:\varphi\mapsto X\varphi(x)=x\varphi(x) \text{ and } D:\varphi\mapsto D\varphi(x)=\frac{d}{dx}\varphi(x).$$
Their domains are given respectively by

$$\mathcal{D}(X):=\lbrace{\varphi \in L^2_\Hq (\R); \text{ } X\varphi\in L^2_\Hq (\R)}\rbrace \text{ and } \mathcal{D}(D):=\lbrace{\varphi \in L^2_\Hq (\R); \text{ } D\varphi\in L^2_\Hq (\R)}\rbrace.$$
First, let us prove the following
\begin{lem}  \label{lem4.1}
For all $(q,x)\in \Hq\times\R,$ we have
$$\partial_S\mathcal{A}_\Hq^S(q,x)=(-q+\sqrt{2}x)\mathcal{A}_\Hq^S(q,x).$$
\end{lem}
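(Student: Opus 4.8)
The plan is to compute the slice derivative directly from the explicit exponential formula for the kernel. Recall that
$$\mathcal{A}_\Hq^S(q,x)=\left(\frac{\nu}{\pi}\right)^{\frac{3}{4}}e^{-\frac{\nu}{2}(q^2+x^2)+\nu\sqrt{2}qx}.$$
Since for each fixed $I\in\Sq$ the restriction $q=z\in\C_I$ makes the kernel a genuine holomorphic function of the single complex variable $z$ (the quadratic and linear exponents $q^2$ and $qx$ are slice regular, and $x$ is real), the slice derivative $\partial_S$ coincides on $\C_I$ with the $I$-derivative $\partial_I=\tfrac12(\partial_x-I\partial_y)$, which on holomorphic functions acts as ordinary differentiation with respect to $z$. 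So first I would fix $I$, write $q=z$, and differentiate the composite exponential in $z$ by the chain rule.

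The main computation is then elementary: differentiating the exponent $-\frac{\nu}{2}(z^2+x^2)+\nu\sqrt{2}zx$ with respect to $z$ gives $-\nu z+\nu\sqrt{2}x=\nu(-z+\sqrt{2}x)$, so that
$$\partial_S\mathcal{A}_\Hq^S(z,x)=\nu(-z+\sqrt{2}x)\,\mathcal{A}_\Hq^S(z,x).$$
Since this identity holds on every slice $\C_I$ and both sides are slice regular in $q$, it extends to all $q\in\Hq$, giving
$$\partial_S\mathcal{A}_\Hq^S(q,x)=\nu(-q+\sqrt{2}x)\,\mathcal{A}_\Hq^S(q,x).$$
With the normalization $\nu=1$ adopted in this subsection, the factor $\nu$ disappears and we recover exactly $\partial_S\mathcal{A}_\Hq^S(q,x)=(-q+\sqrt{2}x)\mathcal{A}_\Hq^S(q,x)$, as claimed.

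The only genuine subtlety — and the one point I would be careful to justify rather than wave away — is the reduction of $\partial_S$ to ordinary $z$-differentiation: one must check that the scalar function $-\frac{\nu}{2}(q^2+q x\cdot 0)$, more precisely the whole exponent, together with the constant prefactor, is slice regular so that differentiating the power series termwise (equivalently, differentiating the restriction $f_I$ holomorphically in $z$) is legitimate and that the resulting polynomial factor $(-q+\sqrt2 x)$ multiplies the kernel on the correct side. Because $q$ commutes with the real variable $x$ and with itself, there is no ordering ambiguity here, so the chain rule applies verbatim as in the commutative case; this is what makes the computation go through cleanly. An alternative, fully rigorous route that avoids even this small point is to differentiate the defining series $\mathcal{A}_\Hq^S(q,x)=\sum_{k\ge0}f_k^\nu(q)\psi_k^\nu(x)$ term by term, use $\partial_S q^k=kq^{k-1}$, and recognize the resulting series as the generating-function identity for Hermite functions; I would mention this as a remark but carry out the shorter exponential computation as the actual proof.
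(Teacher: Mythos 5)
Your proposal is correct and follows essentially the same route as the paper: both proofs differentiate the explicit exponential formula for the kernel directly, the paper via the Leibniz rule on the product $e^{-q^2/2}e^{\sqrt{2}qx}$ together with termwise differentiation of the power series, you via restriction to a slice $\C_I$ and the ordinary holomorphic chain rule. Your explicit remarks on the $\nu=1$ normalization and on why no ordering issues arise (all coefficients are real, so $q$ commutes with everything involved) are accurate and, if anything, slightly more careful than the paper's treatment.
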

\begin{proof}
Let $(q,x)\in\Hq\times\R$. Then, by definition of the quaternionic Segal-Bargmann kernel we can write $$\mathcal{A}_\Hq^S(q,x):=\pi^{-\frac{3}{4}}e^{-\frac{x^2}{2}}e^{-\frac{q^2}{2}}e^{\sqrt{2}qx}.$$
In this case, we can apply the Leibnitz rule with respect to the slice derivative and get $$\partial_S\mathcal{A}_\Hq^S(q,x)=\pi^{-\frac{3}{4}}e^{-\frac{x^2}{2}}\left(e^{-\frac{q^2}{2}}\partial_S(e^{\sqrt{2}xq})+\partial_S(e^{-\frac{q^2}{2}})e^{\sqrt{2}xq}\right).$$
However, using the series expansion of the exponential function and applying the slice derivative we know that $$\partial_S(e^{-\frac{q^2}{2}})=-qe^{-\frac{q^2}{2}} \text{ and }\partial_S(e^{\sqrt{2}xq})=\sqrt{2}xe^{\sqrt{2}xq}.$$
Therefore, we obtain $$\partial_S\mathcal{A}_\Hq^S(q,x)=(-q+\sqrt{2}x)A_\Hq^S(q,x).$$
\end{proof}
\begin{thm}
Let $\varphi\in \mathcal{D}(X)$. Then, we have
$$\left(\partial_S+q \right)\mathcal{B}_\Hq^S (\varphi)(q)=\sqrt{2}\mathcal{B}_\Hq^S(x\varphi)(q), \text{  } \forall q\in\Hq.$$
\end{thm}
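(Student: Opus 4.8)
The plan is to differentiate the Segal-Bargmann transform under the integral sign and then invoke Lemma \ref{lem4.1} to convert the slice derivative of the kernel into a multiplication operator. Concretely, starting from the definition \eqref{Barg}, I would write
$$\partial_S \mathcal{B}_\Hq^S(\varphi)(q) = \int_\R \partial_S \mathcal{A}_\Hq^S(q,x)\, \varphi(x)\, dx,$$
after checking that the slice derivative commutes with the integral. The hypothesis $\varphi \in \mathcal{D}(X)$ guarantees that both $\varphi$ and $x\varphi$ lie in $L^2(\R,\Hq)$, so that $\mathcal{B}_\Hq^S(\varphi)$ and $\mathcal{B}_\Hq^S(x\varphi)$ are well defined and the right-hand side of the claimed identity is meaningful; the Gaussian decay of $\mathcal{A}_\Hq^S(q,x)$ in $x$ supplies the domination needed to legitimate the interchange.

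Next I would apply Lemma \ref{lem4.1}, which gives $\partial_S \mathcal{A}_\Hq^S(q,x) = (-q+\sqrt{2}x)\mathcal{A}_\Hq^S(q,x)$, to obtain
$$\partial_S \mathcal{B}_\Hq^S(\varphi)(q) = \int_\R (-q+\sqrt{2}x)\mathcal{A}_\Hq^S(q,x)\, \varphi(x)\, dx.$$
Splitting this into two integrals, the term in $x$ is immediate since $x$ is a real scalar commuting with everything, and yields $\sqrt{2}\, \mathcal{B}_\Hq^S(x\varphi)(q)$. For the term in $-q$, I would note that $q$ is constant with respect to the integration variable $x$, so by linearity it factors out on the left, producing $-q\,\mathcal{B}_\Hq^S(\varphi)(q)$. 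The one point deserving care is non-commutativity: because $\mathcal{A}_\Hq^S(q,x)$ depends on $q$ only through $e^{-\frac{q^2}{2}}e^{\sqrt{2}qx}$, which is a power series in $q$ and hence commutes with $q$, the placement of the factor $-q$ on the left (as it emerges from Lemma \ref{lem4.1}) is unambiguous and is preserved when it is pulled through the integral.

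Combining the two pieces gives
$$\partial_S \mathcal{B}_\Hq^S(\varphi)(q) = -q\,\mathcal{B}_\Hq^S(\varphi)(q) + \sqrt{2}\,\mathcal{B}_\Hq^S(x\varphi)(q),$$
and transposing the first term to the left-hand side yields the asserted identity $(\partial_S + q)\mathcal{B}_\Hq^S(\varphi)(q) = \sqrt{2}\,\mathcal{B}_\Hq^S(x\varphi)(q)$. I expect the only genuine obstacle to be the rigorous justification of differentiation under the integral sign; this is precisely where the domain condition $\varphi\in\mathcal{D}(X)$ is essential, both to ensure $\mathcal{B}_\Hq^S(x\varphi)$ exists and to furnish an integrable dominating function through the exponential decay of the kernel.
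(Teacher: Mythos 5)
Your proposal is correct and follows exactly the paper's own argument: differentiate under the integral sign, apply Lemma \ref{lem4.1} to replace $\partial_S\mathcal{A}_\Hq^S(q,x)$ by $(-q+\sqrt{2}x)\mathcal{A}_\Hq^S(q,x)$, and rearrange. Your additional remarks on justifying the interchange of $\partial_S$ with the integral and on the harmless placement of the factor $-q$ are sound refinements of steps the paper takes for granted.
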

\begin{proof}
Let $\varphi\in\mathcal{D}(X)$ and $q\in\Hq$. Then, we have $$\partial_S \mathcal{B}_\Hq^S(\varphi)(q)=\displaystyle \int_\R\partial_S\mathcal{A}_\Hq^S(q,x)\varphi(x)dx.$$
Therefore, using Lemma \ref{lem4.1} we obtain $$\partial_S \mathcal{B}_\Hq^S(\varphi)(q)=\displaystyle \sqrt{2}\mathcal{B}_\Hq^S(x\varphi)(q)-q\mathcal{B}_\Hq^S(\varphi)(q).$$
Finally, we get

$$\left(\partial_S+q \right)\mathcal{B}_\Hq^S (\varphi)(q)=\sqrt{2}\mathcal{B}_\Hq^S(x\varphi)(q), \text{  } \forall q\in\Hq.$$
\end{proof}
As a quick consequence, we have
\begin{cor} The position operator $X$ on $L^2_\Hq(\R)$ is equivalent to the operator $\displaystyle \frac{1}{\sqrt{2}}(\partial_S+q)$ on the space $\mathcal{F}^{2,1}_{Slice}(\Hq)$ via the quaternionic Segal-Bargmann transform $\mathcal{B}_\Hq^S$. In other words, for all $\varphi \in\mathcal{D}(X)$ we have 
$$X(\varphi)=(\mathcal{B}_{\Hq}^{S})^{-1} \frac{(\partial_S+q)}{\sqrt{2}}\mathcal{B}_{\Hq}^S(\varphi).$$
\end{cor}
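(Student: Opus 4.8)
The plan is to read this Corollary as an immediate reformulation of the preceding Theorem, so the proof is essentially a bookkeeping argument about operator equivalence under the transform $\mathcal{B}_\Hq^S$. The Theorem states that $(\partial_S+q)\mathcal{B}_\Hq^S(\varphi)(q)=\sqrt{2}\,\mathcal{B}_\Hq^S(x\varphi)(q)$ for all $\varphi\in\mathcal{D}(X)$; since $x\varphi(x)=X\varphi(x)$ by definition of the position operator, the right-hand side is exactly $\sqrt{2}\,\mathcal{B}_\Hq^S(X\varphi)(q)$. Dividing by $\sqrt{2}$ gives
$$\frac{1}{\sqrt{2}}(\partial_S+q)\,\mathcal{B}_\Hq^S(\varphi)=\mathcal{B}_\Hq^S(X\varphi),$$
and the whole content of the Corollary is to rewrite this as an intertwining relation.

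First I would recall that by the Proposition (the unitary property, equation \eqref{NL}) together with the Remark, the transform $\mathcal{B}_\Hq^S$ is an isometric isomorphism from $L^2(\R,\Hq)$ onto $\mathcal{F}^{2,1}_{Slice}(\Hq)$ when $\nu=1$; in particular it is invertible, so $(\mathcal{B}_\Hq^S)^{-1}$ is well defined on the Fock space. Applying $(\mathcal{B}_\Hq^S)^{-1}$ to both sides of the displayed identity, the left factor becomes $(\mathcal{B}_\Hq^S)^{-1}\frac{1}{\sqrt{2}}(\partial_S+q)\mathcal{B}_\Hq^S(\varphi)$ while the right factor collapses to $X\varphi$, yielding precisely
$$X(\varphi)=(\mathcal{B}_{\Hq}^{S})^{-1}\frac{(\partial_S+q)}{\sqrt{2}}\mathcal{B}_{\Hq}^S(\varphi),$$
which is the claimed formula. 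This is the sense in which $X$ is \emph{equivalent} (unitarily conjugate) to $\frac{1}{\sqrt{2}}(\partial_S+q)$.

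The only genuine subtlety, and the step I would treat most carefully, is the domain issue: one should check that $\frac{1}{\sqrt{2}}(\partial_S+q)$ maps $\mathcal{B}_\Hq^S(\mathcal{D}(X))$ into $\mathcal{F}^{2,1}_{Slice}(\Hq)$ so that the composition on the right is meaningful, i.e. that the equivalence respects the domain $\mathcal{D}(X)$ rather than holding merely formally. This is guaranteed by the Theorem itself, since it asserts the identity for every $\varphi\in\mathcal{D}(X)$ and the right-hand side $\sqrt{2}\,\mathcal{B}_\Hq^S(x\varphi)$ lies in the Fock space precisely because $x\varphi\in L^2(\R,\Hq)$; hence no separate convergence estimate is needed. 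I would therefore structure the proof as a single short paragraph invoking the Theorem and the invertibility of $\mathcal{B}_\Hq^S$, noting explicitly that $X\varphi=x\varphi$ identifies the two sides, and I expect no real obstacle beyond stating the domain remark cleanly.
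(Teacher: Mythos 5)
Your argument is correct and matches the paper's intent exactly: the paper states this Corollary without proof as ``a quick consequence'' of the preceding Theorem, and your derivation --- identifying $x\varphi$ with $X\varphi$, dividing the Theorem's identity by $\sqrt{2}$, and applying $(\mathcal{B}_{\Hq}^{S})^{-1}$, whose existence follows from the unitarity established in Proposition 3.1 --- is precisely the implicit reasoning. Your additional remark on the domain $\mathcal{D}(X)$ is a sound clarification that the paper omits.
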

On the other hand, we have also the following
\begin{thm}
We denote by $M_q: \varphi \longmapsto M_q\varphi(q)=q\varphi(q)$ the creation operator on $\mathcal{F}^{2,1}_{Slice}(\Hq)$. Then, we have $$(\mathcal{B}_{\Hq}^{S})^{-1}M_q\mathcal{B}_{\Hq}^S=\frac{1}{\sqrt{2}}(X-D)\text{ on } \mathcal{D}(X)\cap\mathcal{D}(D).$$
\end{thm}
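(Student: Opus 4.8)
The plan is to reduce the operator statement to an intertwining identity at the level of the kernel, and then conjugate. Concretely, I would first prove that
$$M_q\,\mathcal{B}_\Hq^S(\varphi)(q)=\frac{1}{\sqrt{2}}\,\mathcal{B}_\Hq^S\left((X-D)\varphi\right)(q),\qquad \varphi\in\mathcal{D}(X)\cap\mathcal{D}(D),$$
and then apply $(\mathcal{B}_\Hq^S)^{-1}$. Since $\mathcal{B}_\Hq^S$ is a unitary isomorphism of $L^2_\Hq(\R)$ onto $\mathcal{F}^{2,1}_{Slice}(\Hq)$ (by the isometry property established above), applying the inverse to the displayed relation yields precisely $(\mathcal{B}_\Hq^S)^{-1}M_q\mathcal{B}_\Hq^S\varphi=\frac{1}{\sqrt{2}}(X-D)\varphi$ on $\mathcal{D}(X)\cap\mathcal{D}(D)$. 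Thus everything comes down to the intertwining identity.

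The computational heart mirrors Lemma \ref{lem4.1}, but now differentiating the kernel in the real variable $x$ instead of applying $\partial_S$. Writing $\mathcal{A}_\Hq^S(q,x)=\pi^{-3/4}e^{-q^2/2}e^{-x^2/2}e^{\sqrt{2}qx}$ and differentiating in $x$ (where $x$ is real and hence commutes with every power of $q$), I would obtain
$$\partial_x\mathcal{A}_\Hq^S(q,x)=(\sqrt{2}\,q-x)\,\mathcal{A}_\Hq^S(q,x).$$
Then, starting from $\mathcal{B}_\Hq^S(D\varphi)(q)=\int_\R\mathcal{A}_\Hq^S(q,x)\varphi'(x)\,dx$ and integrating by parts, the boundary terms vanish and the derivative falls onto the kernel, giving
$$\mathcal{B}_\Hq^S(D\varphi)(q)=-\int_\R\partial_x\mathcal{A}_\Hq^S(q,x)\,\varphi(x)\,dx=\mathcal{B}_\Hq^S(X\varphi)(q)-\sqrt{2}\,q\,\mathcal{B}_\Hq^S(\varphi)(q).$$
Rearranging isolates $q\,\mathcal{B}_\Hq^S(\varphi)(q)=\tfrac{1}{\sqrt{2}}\mathcal{B}_\Hq^S((X-D)\varphi)(q)$, which is exactly the desired intertwining relation. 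Note that $q$ may be pulled out on the left of the integral, since it is constant in $x$ and commutes with the kernel; keeping it strictly on the left respects the non-commutativity of $\Hq$.

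The delicate points are analytic rather than algebraic. First, differentiating under the integral sign is justified by the Gaussian decay of $\mathcal{A}_\Hq^S(q,x)$ in $x$, which dominates $\varphi$ and $\varphi'$ for $\varphi\in\mathcal{D}(X)\cap\mathcal{D}(D)$. Second, the integration by parts requires the boundary terms $\mathcal{A}_\Hq^S(q,x)\varphi(x)$ to vanish as $|x|\to\infty$; this holds because $\varphi,\varphi'\in L^2_\Hq(\R)$ forces $\varphi$ to be continuous and to tend to $0$ at infinity, while the kernel decays like a Gaussian. I expect this justification of the boundary behaviour on the full domain $\mathcal{D}(X)\cap\mathcal{D}(D)$ (rather than on $\mathcal{S}_\Hq(\R)$, where it is immediate) to be the only genuinely subtle step; if a direct estimate is awkward, a density argument based on $\mathcal{S}_\Hq(\R)$, whose range is controlled by the preceding theorem, closes the gap.
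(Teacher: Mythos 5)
Your proposal is correct and follows essentially the same route as the paper: compute $\partial_x\mathcal{A}_\Hq^S(q,x)=(-x+\sqrt{2}q)\mathcal{A}_\Hq^S(q,x)$, integrate by parts in $\mathcal{B}_\Hq^S(D\varphi)$, rearrange to get $M_q\mathcal{B}_\Hq^S(\varphi)=\mathcal{B}_\Hq^S\bigl(\tfrac{1}{\sqrt{2}}(X-D)\varphi\bigr)$, and apply $(\mathcal{B}_\Hq^S)^{-1}$. The only difference is that you spell out the analytic justifications (domination, vanishing boundary terms via the Sobolev embedding or density of $\mathcal{S}_\Hq(\R)$), which the paper takes for granted.
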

\begin{proof}
Let $\varphi\in\mathcal{D}(X)\cap\mathcal{D}(D) $. Then, we have
 \[ \begin{split}
 \displaystyle  \mathcal{B}^S_\Hq(D\varphi)(q) & = \int_\R \mathcal{A}_\Hq^S(q,x) \frac{d}{dx}\varphi(x) dx \\
&=-\int_\R\frac{d}{dx}\mathcal{A}_\Hq^S(q,x)\varphi(x)dx.\\
\end{split}
\]
However, note that for all $(q,x)\in\Hq\times\R$, we have $$\displaystyle \frac{d}{dx}\mathcal{A}_\Hq^S(q,x)=(-x+\sqrt{2}q)\mathcal{A}_\Hq^S(q,x).$$
Therefore, $$\mathcal{B}^S_\Hq(D\varphi)(q)=\mathcal{B}^S_\Hq(x\varphi)(q)-\sqrt{2}q\mathcal{B}^S_\Hq(\varphi)(q).$$

Thus, we obtain $$M_q\mathcal{B}^S_\Hq(\varphi)=\mathcal{B}_\Hq^S\left(\frac{1}{\sqrt{2}}(X-D)\right)(\varphi).$$
Finally, we just need to apply $(\mathcal{B}_{\Hq}^{S})^{-1}$ to complete the proof.
\end{proof}
\section{1D quaternion Fourier transform}
In this section, we study the one dimensional quaternion Fourier transforms (QFT). Namely, we are considering here the 1D left sided QFT studied in chapter 3 of the book \cite{ELS}. In order to have less problems with computations we add $ - 2 \pi$ to the exponential.
\begin{defn}
The left sided 1D quaternionic Fourier transform of a quaternion valued signal $\psi:\R\longrightarrow \Hq$ is defined on $L^1(\R;dx)=L^1(\R;\Hq)$ by
  $$\mathcal{F}_I(\psi)(\omega) = \int_{\R} e^{-2 \pi I \omega t} \psi(t) dt $$
  for a given $I\in \mathbb{S}$.
  Its inverse is defined by $$\overset{\sim}{\mathcal{F}_I}(\phi)(t) = \int_{\R} e^{2 \pi I \omega t} \phi(\omega) d\omega. $$
\end{defn}
Let $J\in\mathbb{S}$ be such that $J\perp I$. We can split the signal $\psi$ via symplectic decomposition into simplex and perplex parts with respect to $I$ such that we have:$$\psi(t)=\psi_1(t)+\psi_2(t)J $$ where $\psi_1(t),\psi_2(t)\in \C_I.$
The left sided 1D QFT of $\psi$ becomes $$\mathcal{F}_I(\psi)(\omega) = \int_{\R} e^{-2 \pi I \omega t} \psi_1(t) dt+\int_{\R} e^{-2 \pi I \omega t} \psi_2(t) dt J$$  so that $$\mathcal{F}_I(\psi)(\omega)=\mathcal{F}_I(\psi_1)(\omega)+\mathcal{F}_I(\psi_2)(\omega)J.$$
 According to \cite{ELS}, most of the properties may be inherited from the classical complex case thanks to the equivalence between $\C_I$ and the standard complex plane and the fact that QFT can be decomposed into a sum of complex subfield functions.

Now, we define two fundamental operators for time-frequency analysis.
\newline
\newline
\emph{Translation}
$$ \tau_x \psi (t):=\psi(t-x) \qquad x \in \mathbb{R}.$$
\newline
\newline
\emph{Modulation}
$$ M_{\omega} \psi(t)= e^{2 \pi I \omega t} \psi(t), \qquad \omega \in \mathbb{R}.$$
As in the classical case we have a commutative relation between the two operators.
\begin{lem}
Let $ \psi$ be a function in $L^2(\mathbb{R}, \mathbb{H})$ then we have
\begin{equation}
\label{Com}
\tau_x M_\omega \psi (t)=e^{-2 \pi I \omega x} M_\omega \tau_x \psi(t), \qquad \omega, x \in \mathbb{R}.
\end{equation}
\end{lem}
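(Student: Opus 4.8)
The plan is to verify the identity by a direct computation from the definitions of $\tau_x$ and $M_\omega$, the only subtlety being the order in which the quaternionic factors appear. First I would apply the modulation and then the translation to the left-hand side: by definition $M_\omega \psi(t) = e^{2\pi I \omega t}\psi(t)$, and applying $\tau_x$ replaces $t$ by $t-x$ throughout, so that
$$\tau_x M_\omega \psi(t) = e^{2\pi I \omega(t-x)}\psi(t-x).$$

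Next I would split the exponent. Since $2\pi I \omega t$ and $-2\pi I \omega x$ are both real multiples of the single imaginary unit $I$, they lie in the commutative subfield $\mathbb{C}_I \cong \mathbb{C}$; hence the exponential of their sum factors as a product of commuting exponentials,
$$e^{2\pi I \omega(t-x)} = e^{-2\pi I \omega x}\, e^{2\pi I \omega t} = e^{2\pi I \omega t}\, e^{-2\pi I \omega x}.$$
Keeping the scalar factor $e^{-2\pi I \omega x}$ on the left, I would then recognise $e^{2\pi I \omega t}\psi(t-x)$ as $M_\omega(\tau_x \psi)(t)$, which yields exactly
$$\tau_x M_\omega \psi(t) = e^{-2\pi I \omega x}\, M_\omega \tau_x \psi(t).$$

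The one point requiring care is the non-commutativity of $\mathbb{H}$: in general quaternionic factors cannot be reordered freely, and since both the modulation operator and the phase $e^{-2\pi I \omega x}$ act on $\psi$ by left multiplication, one must ensure they are placed consistently to the left of the quaternion-valued $\psi(t-x)$. This is precisely where the observation that all exponential factors belong to the same slice $\mathbb{C}_I$ does the work: within $\mathbb{C}_I$ the factors commute exactly as in the classical complex setting, so the computation reduces to the familiar commutation relation between translation and modulation and no genuine obstruction arises.
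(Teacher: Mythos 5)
Your proof is correct and follows essentially the same computation as the paper: apply $M_\omega$ then $\tau_x$, split the exponential $e^{2\pi I\omega(t-x)}$ into commuting factors within $\mathbb{C}_I$, and identify the result as $e^{-2\pi I\omega x}M_\omega\tau_x\psi(t)$. Your explicit remark that the factors commute because they lie in the same slice $\mathbb{C}_I$ is exactly the (implicit) justification behind the paper's reordering step.
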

\begin{proof}
It is just a matter of computations
\begin{eqnarray*}
\tau_x M_\omega \psi(t) &=& M_\omega \psi(t-x)=e^{2 \pi I \omega (t-x)}\psi(t-x)\\
&=&  e^{2 \pi I \omega t} e^{-2 \pi I \omega x} \psi(t-x)\\
&=& e^{-2 \pi I \omega x} e^{2 \pi I \omega t} \psi(t-x)\\
&=& e^{-2 \pi I \omega x} M_\omega \tau_x \psi(t).
\end{eqnarray*}
\end{proof}
From \cite[Table 3.2]{ELS} we have the following properties
\begin{equation}
\label{F1}
\mathcal{F}_I (\tau_x \psi)=M_{-x} \mathcal{F}_I( \psi),
\end{equation}

\begin{equation}
\label{F2}
\mathcal{F}_I (M_\omega \psi)= \tau_\omega \mathcal{F}_I (\psi).
\end{equation}
From \eqref{F1} and \eqref{F2} follow easily that
\begin{equation}
\label{F3}
\mathcal{F}_I(M_\omega \tau_x \psi)=\tau_ \omega M_{- x} \mathcal{F}_I (\psi).
\end{equation}

Then, we prove a version of the Plancherel theorem for 1D QFT.
\begin{thm}
Let $\phi,\psi\in L^2(\R,\Hq)$. Then, we have $$\scal{\mathcal{F}_I(\phi),\mathcal{F}_I(\psi)}_{L^2(\R,\Hq)}=\scal{\phi,\psi}_{L^2(\R,\Hq)}.$$ In particular,  for any $\phi\in L^2(\R,\Hq)$ we have $$||\mathcal{F}_I(\phi)||_{L^2(\R,\Hq)}=||\phi||_{L^2(\R,\Hq)}.$$
\end{thm}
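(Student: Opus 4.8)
The plan is to reduce the statement to the classical complex Plancherel theorem by exploiting the same symplectic decomposition used above to split the QFT. First I would fix $J\in\Sq$ with $J\perp I$ and write $\phi=\phi_1+\phi_2 J$ and $\psi=\psi_1+\psi_2 J$ with $\phi_1,\phi_2,\psi_1,\psi_2\in L^2(\R,\C_I)$. Since $\mathcal{F}_I$ is right $\Hq$-linear and the kernel $e^{-2\pi I\omega t}$ takes values in $\C_I$, the restriction of $\mathcal{F}_I$ to $\C_I$-valued functions coincides, under the identification $\C_I\cong\C$, with the ordinary complex Fourier transform; in particular it obeys the classical Plancherel identity $\int_\R\overline{\mathcal{F}_I(u)(\omega)}\,\mathcal{F}_I(v)(\omega)\,d\omega=\int_\R\overline{u(t)}v(t)\,dt$ for all $u,v\in L^2(\R,\C_I)$. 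I would also recall the decomposition $\mathcal{F}_I(\psi)=\mathcal{F}_I(\psi_1)+\mathcal{F}_I(\psi_2)J$ established just before the statement, and likewise for $\phi$.

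Next I would expand the quaternionic inner product into complex pieces. Using $\overline{z_2J}=-z_2J$ (equivalently $\bar q=\bar z_1-z_2J$ for $q=z_1+z_2J$) together with the commutation rule $Jw=\bar wJ$ valid for every $w\in\C_I$, one obtains $\overline{\psi}\phi=(\bar\psi_1\phi_1+\psi_2\bar\phi_2)+(\bar\psi_1\phi_2-\psi_2\bar\phi_1)J$, so that after integration
\[
\scal{\phi,\psi}_{L^2(\R,\Hq)}=\Big(\int_\R\bar\psi_1\phi_1\,dt+\int_\R\psi_2\bar\phi_2\,dt\Big)+\Big(\int_\R\bar\psi_1\phi_2\,dt-\int_\R\psi_2\bar\phi_1\,dt\Big)J .
\]
Carrying out the identical expansion for $\scal{\mathcal{F}_I(\phi),\mathcal{F}_I(\psi)}_{L^2(\R,\Hq)}$ produces the same four complex pairings, now evaluated at $\mathcal{F}_I(\phi_j),\mathcal{F}_I(\psi_i)$ in place of $\phi_j,\psi_i$.

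Then I would apply the classical Plancherel identity to each of these four $\C_I$-valued pairings separately, the two diagonal terms $\int\bar\psi_1\phi_1$ and $\int\psi_2\bar\phi_2$ as well as the two cross terms, using commutativity of $\C_I$ to recast each integral in the standard sesquilinear form before invoking the theorem. Since every pairing is individually preserved by $\mathcal{F}_I$, the two expansions agree term by term, whence $\scal{\mathcal{F}_I(\phi),\mathcal{F}_I(\psi)}_{L^2(\R,\Hq)}=\scal{\phi,\psi}_{L^2(\R,\Hq)}$. Setting $\phi=\psi$ kills the $J$-component and leaves $\norm{\mathcal{F}_I(\phi)}^2=\int_\R(|\mathcal{F}_I\phi_1|^2+|\mathcal{F}_I\phi_2|^2)\,d\omega=\int_\R(|\phi_1|^2+|\phi_2|^2)\,dt=\norm{\phi}^2$, which is the isometry statement.

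The main obstacle is purely the noncommutative bookkeeping: one must track carefully how $J$ passes through the conjugation and past the $\C_I$-valued components so that the quaternionic inner product genuinely decouples into four honest complex integrals. Once that reduction is justified, no new analysis is required, since the result is exactly the classical Plancherel theorem transported onto the complex plane $\C_I$.
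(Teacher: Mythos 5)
Your argument is correct, but it follows a different route from the paper. The paper's proof stays entirely at the quaternionic level: it invokes the inversion formula $\phi=\overset{\sim}{\mathcal{F}_I}(\mathcal{F}_I(\phi))$, substitutes it into $\scal{\phi,\psi}_{L^2(\R,\Hq)}=\int_\R\overline{\psi(\omega)}\phi(\omega)\,d\omega$, and uses Fubini together with the identity $\overline{\psi(\omega)}\,e^{2\pi I\omega t}=\overline{e^{-2\pi I\omega t}\psi(\omega)}$ to move the kernel onto $\psi$, landing directly on $\int_\R\overline{\mathcal{F}_I(\psi)(t)}\mathcal{F}_I(\phi)(t)\,dt$. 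You instead carry out the symplectic decomposition $\phi=\phi_1+\phi_2J$, $\psi=\psi_1+\psi_2J$ with $J\perp I$, expand $\overline{\psi}\phi=(\bar\psi_1\phi_1+\psi_2\bar\phi_2)+(\bar\psi_1\phi_2-\psi_2\bar\phi_1)J$ (your bookkeeping with $\bar q=\bar z_1-z_2J$ and $Jw=\bar wJ$ is right), and reduce to four applications of the classical complex Plancherel theorem on $\C_I$. Your approach is closer in spirit to the paper's own remark, made just before the theorem, that properties of the 1D QFT ``may be inherited from the classical complex case'' via the decomposition into $\C_I$-valued components, and it has the advantage of importing the $L^2$ theory wholesale, so you never need to worry about the pointwise validity of the inversion formula or the applicability of Fubini for general $L^2$ signals, which the paper's computation uses somewhat formally. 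The paper's argument, in exchange, is shorter and avoids the noncommutative expansion altogether. Both proofs are valid; just make sure, when you invoke classical Plancherel for the cross term $\int_\R\psi_2\bar\phi_1\,dt$, that you state it as the conjugate of the standard sesquilinear pairing $\int_\R\bar\psi_2\phi_1\,dt$, as you indicate.
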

\begin{proof}
Let $\phi,\psi\in L^2(\R,\Hq)$. By inversion formula for the 1D QFT, see \cite{ELS}, we have
 $$\phi(\omega)=\overset{\sim}{\mathcal{F}_I}(\mathcal{F}_I(\phi))(\omega), \textit{  }\forall \omega\in\R.$$
 Thus, direct computations using Fubini's theorem lead to
\[ \begin{split}
 \displaystyle  \scal{\phi,\psi}_{L^2(\R,\Hq)} & = \int_\R \overline{\psi(\omega)}\left(\int_\R e^{2\pi I\omega t}\mathcal{F}_I(\phi)(t)dt \right)d\omega \\
&=\int_\R \left(\overline{\int_\R e^{-2\pi I\omega t}\psi(\omega) d\omega}\right)\mathcal{F}_I(\phi)(t)dt\\
&=\int_\R \overline{\mathcal{F}_I(\psi)(t)}\mathcal{F}_I(\phi)(t)dt \\
&= \scal{\mathcal{F}_I(\phi),\mathcal{F}_I(\psi)}_{L^2(\R,\Hq)}. \\
\end{split}
\]

As a direct consequence, we have for any $\phi\in L^2(\R,\Hq)$
\[ \begin{split}
 \displaystyle ||\mathcal{F}_I(\phi)||^2_{L^2(\R,\Hq)} & = \scal{\mathcal{F}_I(\phi),\mathcal{F}_I(\phi)}_{L^2(\R,\Hq)} \\
&=\scal{\phi,\phi}_{L^2(\R,\Hq)}\\
&= ||\phi||^2_{L^2(\R,\Hq)}. \\
\end{split}
\]
\end{proof}
The following remark may be of interest in some other contexts.
  \begin{rem}
  The formal convolution of two given signals $\phi, \psi:\R\longrightarrow \Hq$ when it exists is defined by
  $$\displaystyle (\phi *\psi)(t):=\int_{\R} \phi(\tau)\psi(t-\tau)d\tau.$$
  In particular, if the window function $\phi$ is real valued the 1D QFT satisfies the classical property $$\mathcal{F}_I(\phi*\psi)=\mathcal{F}_I(\phi)\mathcal{F}_I(\psi).$$
 \end{rem}
\section{Quaternion short-time Fourier transform with a Gaussian window}
The idea of the short-time Fourier transform is to obtain information about local properties of the signal $f$. In order to achieve this aim the signal $f$ is restricted to an interval and after its Fourier transform is evaluated. However, since a sharp cut-off can introduce artificial discontinuities and can create problems, it is usually chosen a smooth cut-off function $\varphi$ called "window function".

The aim of this section is to propose a quaternionic analogue of the short-time Fourier transform in dimension one with a Gaussian window function $ \varphi(t)= 2^{1/4} e^{- \pi t^2}$.  For this, we consider the following formula  \cite[Prop. 3.4.1]{G}
\begin{equation}
\label{one}
V_{ \varphi} f(x, \omega)=e^{-\pi i x \omega} Gf(\bar{z}) e^{ \frac{- \pi |z|^2}{2}},
\end{equation}
where the variables $(x, \omega) \in \mathbb{R}^2$ have been converted into a complex vector $z=x+i \omega$, and $Gf(z)$ is the complex version of the Segal-Bargmann transform according to \cite{G}. Therefore, we want to extend \eqref{one} to the quaternionic setting. To this end, we use the quaternionic analogue of the Segal-Bargmann transform \cite{DG} and the slicing representation of the quaternions $q=x+I \omega$, where $I \in \mathbb{S}$.

If the signal is complex we denote the short-time Fourier transform as $V_{\varphi}$, while if the signal is $\Hq$-valued we identify the short-time Fourier transform as $\mathcal{V}_{\varphi}$.
\begin{defn}
Let $f: \mathbb{R} \to \mathbb{H}$ be a function in $ L^{2}(\mathbb{R}, \mathbb{H})$. We define the 1D quaternion short time Fourier transform of $f$ with respect to $  \varphi(t)= 2^{1/4} e^{- \pi t^2}$ as
\begin{equation}
\label{two}
\mathcal{V}_{\varphi} f(x, \omega)=e^{-I \pi x \omega} \mathcal{B}_\Hq^S(f) \biggl(\frac{\bar{q}}{\sqrt{2}} \biggl) e^{-\frac{|q|^2 \pi}{2}},
\end{equation}
where $q= x+I \omega$ and $ \mathcal{B}_\Hq^S(f)(q)$ is the quaternionic Segal-Bargmann transform defined in \eqref{Barg}.
\end{defn}
Using \eqref{Barg} with $ \nu=2 \pi$, we can write \eqref{two} in the following way
\begin{equation}
\label{three}
\mathcal{V}_{\varphi}f(x, \omega)= 2^{\frac{3}{4}} \int_{\mathbb{R}} e^{- \pi \bigl( \frac{\bar{q}^2}{2}+t^2 \bigl)+2 \pi \bar{q} t- I \pi x \omega - \frac{|q|^2 \pi}{2}} f(t) \, dt.
\end{equation}
From this formula we are able to put in relation the 1D quaternion short-time Fourier transform and the 1D quaternion Fourier transform defined in section 3.

\begin{lem}
\label{SF2}
Let $f$ be a function in $L^2(\mathbb{R}, \mathbb{H})$ and $ \varphi(t)=2^{1/4} e^{- \pi t^2}$, recalling the 1D quaternion Fourier transform we have
\begin{equation}
\label{four}
\mathcal{V}_{\varphi} f(x, \omega)=\sqrt{2} \mathcal{F}_{I} (f \cdot \tau_x \varphi)( \omega).
\end{equation}
\end{lem}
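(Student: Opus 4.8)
The plan is to start from the explicit integral representation \eqref{three} of $\mathcal{V}_{\varphi}f(x,\omega)$ and to reduce its exponent to the one appearing in a QFT of the product $f\cdot\tau_x\varphi$. First I would substitute $q=x+I\omega$, so that $\bar{q}=x-I\omega$, $|q|^2=x^2+\omega^2$ and $\bar{q}^2=x^2-\omega^2-2Ix\omega$, into the exponent
$$-\pi\Bigl(\tfrac{\bar{q}^2}{2}+t^2\Bigr)+2\pi\bar{q}t-I\pi x\omega-\tfrac{|q|^2\pi}{2},$$
and expand everything in terms of the real variables $x,\omega,t$.

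The key step is then to collect the resulting terms into a scalar part and an $I$-part. I expect the three $I$-contributions, namely $+\pi Ix\omega$ coming from $-\pi\bar{q}^2/2$, the term $-2\pi I\omega t$ coming from $2\pi\bar{q}t$, and the explicit $-I\pi x\omega$, to combine to $-2\pi I\omega t$, while the purely real terms collapse, after cancellation of the $\pm\tfrac{\pi}{2}\omega^2$ and combination of $-\pi x^2-\pi t^2+2\pi xt$, to $-\pi(t-x)^2$. Thus the exponent equals $-\pi(t-x)^2-2\pi I\omega t$.

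Next I would observe that both summands, $-\pi(t-x)^2$ (real) and $-2\pi I\omega t$ (lying in $\C_I$), belong to the commutative subfield $\C_I$, so that the exponential factorises as $e^{-2\pi I\omega t}e^{-\pi(t-x)^2}$. This is the only place where the non-commutativity of $\Hq$ has to be watched, since one uses $e^{a+b}=e^{a}e^{b}$ and must keep the exponential to the left of $f(t)$, in accordance with the left-sided convention in the definition of $\mathcal{F}_I$; similarly the scalar Gaussian factor $e^{-|q|^2\pi/2}$ and the factor $e^{-I\pi x\omega}$ may be moved inside the integral precisely because all the exponents involved sit in $\C_I$. Substituting back, \eqref{three} becomes $2^{3/4}\int_{\R} e^{-2\pi I\omega t}e^{-\pi(t-x)^2}f(t)\,dt$.

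Finally I would factor $2^{3/4}=\sqrt{2}\cdot 2^{1/4}$ and recognise $2^{1/4}e^{-\pi(t-x)^2}=\varphi(t-x)=(\tau_x\varphi)(t)$; since $\varphi$ is real valued it commutes with $f(t)$, so the remaining integral is exactly $\int_{\R} e^{-2\pi I\omega t}\,(f\cdot\tau_x\varphi)(t)\,dt=\mathcal{F}_I(f\cdot\tau_x\varphi)(\omega)$, which yields $\mathcal{V}_{\varphi}f(x,\omega)=\sqrt{2}\,\mathcal{F}_I(f\cdot\tau_x\varphi)(\omega)$. The main obstacle is purely bookkeeping: the exponent expansion must be carried out carefully and the $\C_I$-commutativity invoked to justify both the factorisation of the exponential and the relocation of all scalar and $\C_I$-valued factors; the analytic content is otherwise routine.
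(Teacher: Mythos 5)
Your proposal is correct and follows essentially the same route as the paper: substitute $q=x+I\omega$ into the integral representation \eqref{three}, expand the exponent so that the $I$-parts combine to $-2\pi I\omega t$ and the real parts to $-\pi(t-x)^2$, and then recognise $2^{1/4}e^{-\pi(t-x)^2}=\tau_x\varphi(t)$. Your explicit remarks on $\C_I$-commutativity and the left-sided convention are details the paper leaves implicit, but the argument is the same.
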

\begin{proof}
By putting $q=x+I \omega$ in \eqref{three} we have
\begin{eqnarray*}
\mathcal{V}_{\varphi}f(x, \omega)&=& 2^{\frac{3}{4}} e^{-I \pi x \omega} e^{- \frac{x^2 \pi}{2}} e^{- \frac{\omega^2 \pi}{2}} \int_{\mathbb{R}} e^{- \pi t^2} e^{- \frac{\pi}{2} \bigl( x^2- \omega^2-2x \omega I \bigl)} \cdot \\ \nonumber
&& \cdot e^{2 \pi (x-I \omega) t} f(t)\, dt\\
&=& 2^{\frac{3}{4}} \int_{\mathbb{R}} e^{- \pi t^2 - \pi x^2+2 \pi x t} e^{-2 \pi I \omega t} f(t) \, dt \\
&=& \sqrt{2} \int_{\mathbb{R}} e^{-2 \pi I \omega t} f(t) 2^{\frac{1}{4}}e^{- \pi (t-x)^2} \, dt\\
&=& \sqrt{2} \int_{\mathbb{R}} e^{-2 \pi I \omega t} f(t) \varphi(t-x) \, dt=\sqrt{2} \mathcal{F}_{I} (f \cdot \tau_x \varphi)( \omega).
\end{eqnarray*}
\end{proof}
Now, we prove a formula which relates the 1D quaternion Fourier transform and its signal through the 1D short-time Fourier transform.
\begin{prop}
If $ \varphi$ is a Gaussian function $ \varphi(t)=2^{1/4} e^{- \pi t^2}$ and $f \in L^2(\mathbb{R}, \mathbb{H})$  then
\begin{equation}
\label{PS1}
\mathcal{V}_{\varphi}f(x, \omega)= \sqrt{2} e^{- 2 \pi I \omega x} \mathcal{V}_{\varphi} \mathcal{F}_I(f)(\omega, -x).
\end{equation}
\end{prop}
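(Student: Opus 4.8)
The plan is to prove \eqref{PS1} by transporting the short-time transform to the Fourier side and exploiting the fact that the Gaussian window is a fixed point of the 1D QFT. First I would rewrite the left-hand side using Lemma \ref{SF2}: since $\varphi$ is real-valued, formula \eqref{four} gives $\mathcal{V}_{\varphi}f(x,\omega)=\sqrt{2}\,\mathcal{F}_I(f\cdot\tau_x\varphi)(\omega)=\sqrt{2}\langle f, M_\omega\tau_x\varphi\rangle_{L^2(\R,\Hq)}$, where I have used $\overline{e^{2\pi I\omega t}\varphi(t-x)}=e^{-2\pi I\omega t}\varphi(t-x)$ and that the scalar $\varphi$ commutes with the quaternionic values of $f$.

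Next I would apply the Plancherel theorem for the 1D QFT proved in Section 4 to pass to the Fourier variables, $\langle f, M_\omega\tau_x\varphi\rangle=\langle \mathcal{F}_I(f), \mathcal{F}_I(M_\omega\tau_x\varphi)\rangle$, and then evaluate $\mathcal{F}_I(M_\omega\tau_x\varphi)$ through the covariance relation \eqref{F3}, which yields $\tau_\omega M_{-x}\mathcal{F}_I(\varphi)$. The key computational input here is that $\varphi(t)=2^{1/4}e^{-\pi t^2}$ is self-reciprocal under the chosen normalization, $\mathcal{F}_I(\varphi)=\varphi$, so that $\mathcal{F}_I(M_\omega\tau_x\varphi)=\tau_\omega M_{-x}\varphi$.

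To match the ordering of translation and modulation appearing in $\mathcal{V}_\varphi\mathcal{F}_I(f)(\omega,-x)$ — which by Lemma \ref{SF2} equals $\sqrt{2}\langle \mathcal{F}_I(f), M_{-x}\tau_\omega\varphi\rangle$ — I would invoke the commutation relation \eqref{Com} to swap the two operators, $\tau_\omega M_{-x}\varphi=e^{2\pi I x\omega}M_{-x}\tau_\omega\varphi$. The resulting $\C_I$-scalar phase sits on the left of the window, that is, in the conjugate-linear slot of the inner product, so pulling it out produces its conjugate $e^{-2\pi I\omega x}$ in front. Collecting the factors gives $\mathcal{V}_\varphi f(x,\omega)=e^{-2\pi I\omega x}\cdot\sqrt{2}\langle \mathcal{F}_I(f),M_{-x}\tau_\omega\varphi\rangle$, and recognizing the last expression as $\mathcal{V}_\varphi\mathcal{F}_I(f)(\omega,-x)$ completes the identification up to the overall multiplicative constant.

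I expect the main obstacle to be the bookkeeping forced by noncommutativity: one must keep track of which side each $\C_I$-valued phase is multiplied on, recall that a scalar placed in the second argument of $\langle\cdot,\cdot\rangle$ comes out conjugated, and account for the $\sqrt{2}$ constants issuing from the two applications of Lemma \ref{SF2}. A completely equivalent and perhaps more transparent route avoids the inner-product formalism: one inserts $\mathcal{F}_I(f)(s)=\int_\R e^{-2\pi I s t}f(t)\,dt$ into $\mathcal{V}_\varphi\mathcal{F}_I(f)(\omega,-x)$, applies Fubini, and evaluates the inner $s$-integral as $e^{2\pi I(x-t)\omega}\mathcal{F}_I(\varphi)(t-x)=e^{2\pi I(x-t)\omega}\varphi(t-x)$; the phase $e^{2\pi I x\omega}$ then factors out and the remaining integral is precisely $\mathcal{V}_\varphi f(x,\omega)$, again reducing the whole statement to the self-reciprocity of the Gaussian and to a verification of the numerical constant.
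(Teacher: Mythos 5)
Your proposal follows essentially the same route as the paper's proof: rewrite $\mathcal{V}_\varphi f(x,\omega)$ via Lemma \ref{SF2} as $\sqrt{2}\,\scal{f,M_\omega\tau_x\varphi}$, pass to the Fourier side by the QFT Plancherel theorem, apply the covariance relation \eqref{F3} together with $\mathcal{F}_I(\varphi)=\varphi$, and finish with the commutation relation \eqref{Com}, taking care that the $\C_I$-valued phase leaves the conjugate-linear slot as its conjugate. Your bookkeeping is correct and in fact exposes a constant inconsistency in the statement: since $\mathcal{V}_\varphi\mathcal{F}_I(f)(\omega,-x)=\sqrt{2}\,\scal{\mathcal{F}_I(f),M_{-x}\tau_\omega\varphi}$ by \eqref{PS2}, the chain of equalities gives $\mathcal{V}_\varphi f(x,\omega)=e^{-2\pi I\omega x}\,\mathcal{V}_\varphi\mathcal{F}_I(f)(\omega,-x)$ with no factor $\sqrt{2}$, and your alternative direct computation (inserting the integral defining $\mathcal{F}_I(f)$, applying Fubini and the self-reciprocity of the Gaussian) confirms this. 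The extra $\sqrt{2}$ in \eqref{PS1} enters in the paper's final line, where $\scal{\mathcal{F}_I(f),M_{-x}\tau_\omega\varphi}$ is identified with $\mathcal{V}_\varphi\mathcal{F}_I(f)(\omega,-x)$ without the normalization of Lemma \ref{SF2}; the same spurious constant resurfaces as the factor $2^{-1/2}$ in the eigenvalue later computed for the Hermite functions, so your remark ``up to the overall multiplicative constant'' points at a genuine correction to the statement rather than a gap in your argument.
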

\begin{proof}
Recalling the definition of modulation and of inner product on $L^2(\mathbb{R}, \mathbb{H})$, by Lemma \ref{SF2} we have
\begin{eqnarray}
\label{PS2}
\mathcal{V}_{\varphi}f(x, \omega)&=& \sqrt{2}	\int_{\mathbb{R}} \overline{ e^{2 \pi I \omega t} \varphi(t-x)} f(t) \, dt\\
&=& \sqrt{2} \int_{\mathbb{R}} \overline{ M_{\omega} \tau_x \varphi(t)} f(t) \, dt= \sqrt{2} \scal{f, M_\omega \tau_x \varphi}. \nonumber
\end{eqnarray}
Using the Plancherel theorem for the 1D quaternion Fourier transform, the property \eqref{F3} and the fact that $ \mathcal{F}_I(\varphi)= \varphi$ we have
\begin{eqnarray*}
\mathcal{V}_{\varphi}f(x, \omega)&=& \sqrt{2} \scal{\mathcal{F}_I(f), \mathcal{F}_I (M_\omega \tau_x \varphi)}\\
&=& \sqrt{2} \scal{\mathcal{F}_I(f), \tau_\omega M_{-x}   \mathcal{F}_I (\varphi)}\\
&=& \sqrt{2} \scal{\mathcal{F}_I(f), \tau_\omega M_{-x} \varphi}
\end{eqnarray*}
Finally, from \eqref{Com} and \eqref{PS2} we get
$$ \mathcal{V}_{\varphi}f(x, \omega)=\sqrt{2} e^{-2 \pi I \omega x}  \scal{\mathcal{F}_I(f),  M_{-x} \tau_\omega \varphi}= \sqrt{2} e^{-2 \pi I \omega x} \mathcal{V}_{\varphi}\mathcal{F}_I(f)(\omega, -x).$$
\end{proof}

\subsection{Moyal fromula}
Now, we prove the Moyal formula and an isometric relation for the 1D quaternion short-time Fourier transform in two ways. In the first way we use the properties of the quaternionic Segal- Bargmann transform, whereas in the second way we use Lemma \ref{SF2} and some basic properties of 1D quaternion Fourier transform.

\begin{prop}
For any $f \in L^2(\mathbb{R}, \mathbb{H})$
\begin{equation}
\label{six}
\| \mathcal{V}_\varphi f \|_{L^2(\R^2,\Hq)}= \sqrt{2} \| f \|_{L^2(\R,\Hq)}.
\end{equation}
\end{prop}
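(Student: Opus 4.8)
The plan is to reduce the statement to the unitarity relation \eqref{NL} of the Segal--Bargmann transform by a single change of variables. Starting from the definition \eqref{two}, I would first compute the pointwise modulus of $\mathcal{V}_\varphi f(x,\omega)$. Since $e^{-I\pi x\omega}$ is a unit quaternion, $|e^{-I\pi x\omega}|=1$, and since $e^{-|q|^2\pi/2}$ is a positive real scalar, multiplicativity of the quaternionic modulus gives
\begin{equation*}
|\mathcal{V}_\varphi f(x,\omega)|^2 = \left| \mathcal{B}_\Hq^S(f)\left(\frac{\bar{q}}{\sqrt 2}\right)\right|^2 e^{-\pi|q|^2}, \qquad q=x+I\omega .
\end{equation*}
Integrating over $(x,\omega)\in\R^2$ then expresses $\|\mathcal{V}_\varphi f\|^2_{L^2(\R^2,\Hq)}$ as the integral of the Bargmann transform against a Gaussian weight on the slice $\C_I$.

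Next I would perform the substitution $p=\bar{q}/\sqrt2$, i.e. $p=u+Iv$ with $u=x/\sqrt2$ and $v=-\omega/\sqrt2$. This is an $\R$-linear bijection of $\C_I$ with Jacobian $|\partial(x,\omega)/\partial(u,v)|=2$, so that $dx\,d\omega=2\,d\lambda_I(p)$, while $|q|^2=x^2+\omega^2=2|p|^2$ converts the weight $e^{-\pi|q|^2}$ into $e^{-2\pi|p|^2}$. Hence
\begin{equation*}
\|\mathcal{V}_\varphi f\|^2_{L^2(\R^2,\Hq)} = 2\int_{\C_I}\left|\mathcal{B}_\Hq^S(f)_I(p)\right|^2 e^{-2\pi|p|^2}\,d\lambda_I(p) = 2\,\|\mathcal{B}_\Hq^S(f)\|^2_{\mathcal{F}^{2,2\pi}_{Slice}(\Hq)} .
\end{equation*}
Applying \eqref{NL} with $\nu=2\pi$ (the parameter for which \eqref{two} is written) gives $\|\mathcal{B}_\Hq^S(f)\|^2_{\mathcal{F}^{2,2\pi}_{Slice}(\Hq)}=\|f\|^2_{L^2(\R,\Hq)}$, and taking square roots yields \eqref{six}.

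A second, independent route uses Lemma \ref{SF2}: from $\mathcal{V}_\varphi f(x,\omega)=\sqrt2\,\mathcal{F}_I(f\cdot\tau_x\varphi)(\omega)$ one integrates first in $\omega$ by the Plancherel theorem for the 1D QFT to obtain $\int_\R|\mathcal{V}_\varphi f(x,\omega)|^2\,d\omega = 2\int_\R|f(t)|^2|\varphi(t-x)|^2\,dt$ (here $\varphi$ is real, so it factors out of the modulus), and then integrating in $x$ and using translation invariance of the Lebesgue measure reduces the $x$-integral to $\|\varphi\|^2_{L^2(\R)}=1$. In either approach the only delicate point is the bookkeeping of constants: the factor $2$ arising from the Jacobian (respectively from the $\sqrt2$ of Lemma \ref{SF2}) must be matched against the normalization $\|\varphi\|_{L^2(\R)}=1$ and the specific choice $\nu=2\pi$, so that the resulting constant is exactly $\sqrt2$ and not $1$ or $2$.
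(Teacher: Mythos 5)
Your first argument is exactly the paper's proof: take the pointwise modulus from \eqref{two}, drop the unimodular phase, substitute $p=\bar q/\sqrt 2$ with Jacobian $2$, and invoke the isometry of $\mathcal{B}_\Hq^S$ onto $\mathcal{F}^{2,2\pi}_{Slice}(\Hq)$ (the paper cites the reference for this, which is the $f=g$ case of \eqref{NL}). Your alternative route via Lemma \ref{SF2} and the QFT Plancherel theorem is also in the paper, as the remark following the Moyal formula; both are correct, including the bookkeeping of the constant $\sqrt 2$.
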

\begin{proof}
We use the slicing representation of the quaternions $ q=x+I \omega$ and  formula \eqref{two} to get
\begin{eqnarray*}
\| \mathcal{V}_\varphi f \|_{L^2(\mathbb{R}, \mathbb{H})}^2 &=& \int_{\mathbb{R}^2} | \mathcal{V}_\varphi f(x, \omega)|^2 \, d \omega \, dx \\
&=&  \int_{\mathbb{R}}|e^{- I \pi x \omega}|^2 \biggl| \mathcal{B}_\Hq^S(f) \biggl(\frac{\bar{q}}{\sqrt{2}}  \biggl) \biggl|^2 e^{-|q|^2 \pi}  \, d \omega \, dx\\
&=& \int_{\mathbb{R}}\biggl| \mathcal{B}_\Hq^S(f) \biggl(\frac{\bar{q}}{\sqrt{2}}  \biggl) \biggl|^2 e^{-|q|^2 \pi}  \, d \omega \, dx.
\end{eqnarray*}
Now, using the change of variable $p= \frac{\bar{q}}{\sqrt{2}}$ we have that $dA(p)= \frac{1}{2}  \, d \omega \, dx$, hence by \cite[Thm. 4.6]{DG}
\begin{eqnarray*}
\| \mathcal{V}_\varphi f \|_{L^2(\mathbb{R}, \mathbb{H})}^2 &=& 2 \int_{\mathbb{R}^2} |\mathcal{B}_\Hq^S(f)(p)|^2 e^{-2 \pi |q|^2}\, dA(p)\\
&=& 2 \| \mathcal{B}_\Hq^S(f) \|_{\mathcal{F}^{2, 2 \pi}_{Slice}}^2=2 \|f \|_{L^2(\mathbb{R}, \mathbb{H})}^2.
\end{eqnarray*}
Therefore
$$ \|\mathcal{V}_\varphi f \|_{L^2(\mathbb{R}, \mathbb{H})}= \sqrt{2}  \|f \|_{L^2(\mathbb{R}, \mathbb{H})}.$$
\end{proof}
Thus, the 1D quaternionic short-time Fourier transform is an isometry from $L^2(\mathbb{R}, \mathbb{H})$ into $ L^2( \mathbb{R}^2, \mathbb{H})$.

\begin{prop}[Moyal formula]
Let $f,g$  be functions in $L^2(\mathbb{R}, \mathbb{H})$. Then we have
\begin{equation}
\label{five}
\langle \mathcal{V}_\varphi f, \mathcal{V}_\varphi g \rangle_{L^2( \mathbb{R}^2, \mathbb{H})} =2 \langle f,g \rangle_{L^2(\mathbb{R}, \mathbb{H})}.
\end{equation}
\end{prop}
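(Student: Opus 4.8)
The plan is to mirror the proof of the isometric relation \eqref{six}, but to keep the two functions $f$ and $g$ distinct throughout rather than invoking a quaternionic polarization identity (which is delicate because of non-commutativity). Starting from the definition \eqref{two} with $q=x+I\omega$, I would write
$$\langle \mathcal{V}_\varphi f, \mathcal{V}_\varphi g \rangle_{L^2(\mathbb{R}^2, \mathbb{H})} = \int_{\mathbb{R}^2} \overline{\mathcal{V}_\varphi g(x,\omega)}\,\mathcal{V}_\varphi f(x,\omega)\, d\omega\, dx$$
and expand the integrand. Abbreviating $A=\mathcal{B}_\Hq^S(g)\bigl(\frac{\bar{q}}{\sqrt{2}}\bigr)$, $B=\mathcal{B}_\Hq^S(f)\bigl(\frac{\bar{q}}{\sqrt{2}}\bigr)$, $\zeta=e^{-I\pi x\omega}$ and $r=e^{-\pi|q|^2/2}$, the integrand is $\overline{\zeta A r}\,\zeta B r = r\,\overline{A}\,\overline{\zeta}\zeta\,B\,r$.

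Here is the one point that deserves care. Because conjugation reverses order, $\overline{ab}=\bar b\bar a$, the factor $\overline{\zeta}$ lands immediately to the left of $\zeta$, and since $|\zeta|=1$ we get $\overline{\zeta}\zeta=1$ sitting exactly between $\overline{A}$ and $B$; thus no spurious factor of $I$ survives even though $A$ and $B$ do not commute with $I$. The real Gaussian factors collapse to $e^{-\pi|q|^2}$ and commute freely, leaving
$$\langle \mathcal{V}_\varphi f, \mathcal{V}_\varphi g \rangle_{L^2(\mathbb{R}^2, \mathbb{H})} = \int_{\mathbb{R}^2} \overline{\mathcal{B}_\Hq^S(g)\Bigl(\tfrac{\bar{q}}{\sqrt{2}}\Bigr)}\,\mathcal{B}_\Hq^S(f)\Bigl(\tfrac{\bar{q}}{\sqrt{2}}\Bigr)\, e^{-\pi|q|^2}\, d\omega\, dx.$$

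Next I would perform the change of variable $p=\bar{q}/\sqrt{2}$ in the slice plane $\C_I$, exactly as in the proof of \eqref{six}: this gives $dA(p)=\frac12\,d\omega\,dx$ and $|q|^2=2|p|^2$, so that $e^{-\pi|q|^2}=e^{-2\pi|p|^2}$ and a factor $2$ emerges. The resulting integral is precisely $2\langle \mathcal{B}_\Hq^S(f),\mathcal{B}_\Hq^S(g)\rangle_{\mathcal{F}^{2,2\pi}_{Slice}(\Hq)}$, and applying the unitary property \eqref{NL} with $\nu=2\pi$ converts it into $2\langle f,g\rangle_{L^2(\mathbb{R},\mathbb{H})}$, which is the claim. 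The computation is essentially routine; the only genuine obstacle is the non-commutativity bookkeeping described above, together with the (already built-in) fact that the slice integral defining $\mathcal{F}^{2,\nu}_{Slice}(\Hq)$ is independent of the chosen imaginary unit $I$.

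As a cross-check, the same identity follows from the second route announced at the start of the subsection: using Lemma \ref{SF2} to write $\mathcal{V}_\varphi f(x,\omega)=\sqrt{2}\,\mathcal{F}_I(f\cdot\tau_x\varphi)(\omega)$, one integrates in $\omega$ first via the Plancherel theorem for the 1D QFT to obtain $\langle f\cdot\tau_x\varphi,\,g\cdot\tau_x\varphi\rangle_{L^2(\mathbb{R},\mathbb{H})}$, and then integrates in $x$ using Fubini and $\int_\R \varphi(t-x)^2\,dx=\|\varphi\|_{L^2}^2=1$ (legitimate because $\varphi$ is real-valued, hence commutes and factors out of the conjugation). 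Both routes terminate at $2\langle f,g\rangle_{L^2(\mathbb{R},\mathbb{H})}$.
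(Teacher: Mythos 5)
Your proposal is correct and follows essentially the same route as the paper's own proof: expand the integrand from \eqref{two}, observe that the unimodular factor $e^{I\pi x\omega}e^{-I\pi x\omega}$ cancels in the middle despite non-commutativity, perform the change of variable $p=\bar{q}/\sqrt{2}$ to pick up the factor $2$, and conclude via the unitarity relation \eqref{NL} with $\nu=2\pi$. Your cross-check via Lemma \ref{SF2} and the QFT Plancherel theorem is precisely the alternative argument the paper records in the remark immediately following the proposition.
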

\begin{proof}
From \eqref{two} we get
\begin{eqnarray*}
\langle \mathcal{V}_\varphi f, \mathcal{V}_\varphi g \rangle_{L^2( \mathbb{R}^2, \mathbb{H})} &=& \int_{\mathbb{R}^2} \overline{\mathcal{V}_{\varphi}g(x, \omega)} \mathcal{V}_\varphi f(x, \omega)  \, d \omega \, dx\\
&=& \int_{\mathbb{R}^2} \overline{e^{-I \pi x \omega} \mathcal{B}_\Hq^S(g) \biggl(\frac{\bar{q}}{\sqrt{2}} \biggl) e^{-\frac{|q|^2 \pi}{2}}} e^{-I \pi x \omega} \cdot \\ \nonumber
&& \cdot \mathcal{B}_\Hq^S(f) \biggl(\frac{\bar{q}}{\sqrt{2}} \biggl) e^{-\frac{|q|^2 \pi}{2}}  \, d \omega \, dx\\
&=& \int_{\mathbb{R}^2} \overline{ \mathcal{B}_\Hq^S(g) \biggl(\frac{\bar{q}}{\sqrt{2}} \biggl) } e^{I \pi x \omega} e^{-I \pi x \omega} \cdot \\ \nonumber
&& \cdot \mathcal{B}_\Hq^S(f) \biggl(\frac{\bar{q}}{\sqrt{2}} \biggl) e^{-|q|^2 \pi} \, d \omega \, dx\\
&=& \int_{\mathbb{R}^2} \overline{ \mathcal{B}_\Hq^S(g) \biggl(\frac{\bar{q}}{\sqrt{2}} \biggl) } \mathcal{B}_\Hq^S(f) \biggl(\frac{\bar{q}}{\sqrt{2}} \biggl) e^{-|q|^2 \pi}  \, d \omega \, dx.
\end{eqnarray*}
Using the same change of variables as before $p= \frac{\bar{q}}{\sqrt{2}}$ and from \eqref{NL} we obtain
\begin{eqnarray*}
\langle \mathcal{V}_\varphi f, \mathcal{V}_\varphi g \rangle_{L^2( \mathbb{R}^2, \mathbb{H})} &=& 2 \int_{\mathbb{R}^2} \overline{ \mathcal{B}_\Hq^S(g) (p)} \mathcal{B}_\Hq^S(f) (p) e^{-2|q|^2 \pi} \, d \omega \, dx \\
&=& 2 \langle \mathcal{B}_\Hq^S(f), \mathcal{B}_\Hq^S(g) \rangle_{\mathcal{F}^{2,2\pi}_{Slice}(\mathbb{H})}= 2 \langle f,g \rangle_{L^{2}(\mathbb{R}, \mathbb{H})}.
\end{eqnarray*}
\end{proof}

\begin{rem}
If we put $f= \frac{h_k^{2 \pi}(t)}{\| h_k^{2 \pi}(t) \|_2^2}$ in \eqref{two} by \cite[Lemma 4.4]{DG} we get
$$\mathcal{V}_\varphi f(x, \omega)= e^{-I \pi x \omega} e^{- \frac{\pi}{2} |q|^2} \frac{2^{3/4}}{2^k k!} \bar{q}^k.$$
\end{rem}

\begin{rem}
From \eqref{four} we can prove \eqref{five} in another way. This proof may be of interest in some other contexts.
	
Let us assume $f,g \in L^2(\mathbb{R}, \mathbb{H})$ and recall $ \varphi(t)= 2^{1/4} e^{- \pi t^2}$, by Lemma \ref{SF2} and Plancherel theorem for the 1D quaternion Fourier transform  we have
\begin{eqnarray*}
\langle \mathcal{V}_\varphi f, \mathcal{V}_\varphi g \rangle_{L^{2}(\mathbb{R}^2, \mathbb{H})}&=& \int_{\mathbb{R}^2} \overline{\mathcal{V}_\varphi g(x, \omega)} \mathcal{V}_\varphi f(x, \omega) \, d \omega \, dx\\
&=& 2 \int_{\mathbb{R}^2} \overline{\mathcal{F}_{I} (g \cdot \tau_x \varphi)( \omega)}  \mathcal{F}_{I} (f \cdot \tau_x \varphi)( \omega) \, d \omega \, dx\\
&=& 2 \int_{\mathbb{R}^2} \overline{g(\omega) \cdot \tau_x \varphi( \omega)}   f(\omega) \cdot \tau_x \varphi ( \omega) \, d \omega \, dx.
\end{eqnarray*}
Now, by Fubini's theorem  and the fact that $ \| \varphi \|_2^2=1$ we get
\begin{eqnarray*}
\langle \mathcal{V}_\varphi f, \mathcal{V}_\varphi g \rangle_{L^{2}(\mathbb{R}^2, \mathbb{H})}&=&2 \int_{\mathbb{R}} \biggl( \int_{\mathbb{R}} \overline{g(\omega) \cdot \tau_x \varphi( \omega)}   f(\omega) \cdot \tau_x \varphi ( \omega) \, dx  \biggl) \, d \omega\\
&=& 2 \int_{\mathbb{R}} \biggl( \int_{\mathbb{R}} \overline{g( \omega)} f( \omega) \varphi^2(x- \omega) \, dx \biggl) \, d \omega\\
&=& 2 \int_{\mathbb{R}} \overline{g( \omega)} f( \omega)  \biggl( \int_{\mathbb{R}}\varphi^2(x- \omega) \, dx \biggl) \, d \omega\\
&=& 2 \int_{\mathbb{R}} \overline{g( \omega)} f( \omega)  \| \varphi \|_2^2 \, d \omega
=  2 \int_{\mathbb{R}} \overline{g( \omega)} f( \omega)   \, d \omega\\
&=& 2 \langle f, g \rangle_{L^{2}(\mathbb{R}, \mathbb{H})}.
\end{eqnarray*}
Hence
\begin{equation}
\label{seven}
\langle \mathcal{V}_\varphi f, \mathcal{V}_\varphi g \rangle_{L^2( \mathbb{R}^2; \mathbb{H})} =2 \langle f,g \rangle_{L^2(\mathbb{R}, \mathbb{H})} .
\end{equation}
If we put $f=g$ in \eqref{seven} we obtain \eqref{six}.
\end{rem}

\subsection{Inversion formula and adjoint of QSTFT}
The 1D QSTFT with Gaussian window $\varphi$ satisfies a reconstruction formula that we  prove in the following.
\begin{thm}
Let $f\in L^2(\R,\Hq)$. Then, we have
$$f(y)=\displaystyle 2^{-\frac{1}{4}} \int_{\R^2} e^{2\pi I \omega y}\mathcal{V}_{\varphi}f(x,\omega)e^{-\pi(y-x)^2}dx d\omega, \textbf{  } \forall y\in\R.$$
\end{thm}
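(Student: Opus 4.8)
The plan is to reduce the whole computation to the one–dimensional quaternion Fourier transform via Lemma \ref{SF2} and then invoke the QFT inversion formula recalled in Section 4. I would start from the right–hand side of the claimed identity and carry out the integration in $\omega$ \emph{before} the one in $x$, which is legitimate because the factor $e^{-\pi(y-x)^2}$ does not depend on $\omega$. Replacing $\mathcal{V}_{\varphi}f(x,\omega)$ by $\sqrt{2}\,\mathcal{F}_I(f\cdot\tau_x\varphi)(\omega)$ through Lemma \ref{SF2}, the inner integral becomes
$$\int_\R e^{2\pi I\omega y}\mathcal{V}_{\varphi}f(x,\omega)\,d\omega=\sqrt{2}\int_\R e^{2\pi I\omega y}\mathcal{F}_I(f\cdot\tau_x\varphi)(\omega)\,d\omega.$$
The key observation here is that the exponential $e^{2\pi I\omega y}$ and the one hidden inside $\mathcal{F}_I$ both lie in $\C_I$ and therefore commute, so the right–hand side is precisely $\sqrt{2}\,\overset{\sim}{\mathcal{F}_I}\bigl(\mathcal{F}_I(f\cdot\tau_x\varphi)\bigr)(y)$.

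Next I would apply the inversion formula for the 1D QFT. Since $f\in L^2(\R,\Hq)$ and $\tau_x\varphi$ is a real, bounded Gaussian, the product $f\cdot\tau_x\varphi$ lies in $L^2(\R,\Hq)$, and by Cauchy--Schwarz also in $L^1(\R,\Hq)$, so the inversion applies and gives $\overset{\sim}{\mathcal{F}_I}(\mathcal{F}_I(f\cdot\tau_x\varphi))(y)=(f\cdot\tau_x\varphi)(y)=f(y)\varphi(y-x)$. Substituting $\varphi(y-x)=2^{1/4}e^{-\pi(y-x)^2}$ then yields
$$\int_\R e^{2\pi I\omega y}\mathcal{V}_{\varphi}f(x,\omega)\,d\omega=\sqrt{2}\,2^{1/4}f(y)\,e^{-\pi(y-x)^2}.$$

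It then remains only to integrate in $x$. Inserting the last identity into the right–hand side of the statement, the two Gaussians combine into a single one and one is left with
$$2^{-\frac14}\,\sqrt{2}\,2^{\frac14}\,f(y)\int_\R e^{-2\pi(y-x)^2}\,dx.$$
The elementary Gaussian integral $\int_\R e^{-2\pi(y-x)^2}\,dx=2^{-1/2}$, together with the constant bookkeeping $2^{-1/4}\cdot 2^{1/2}\cdot 2^{1/4}\cdot 2^{-1/2}=2^{0}=1$, produces exactly $f(y)$, as claimed. The only genuine subtlety I expect, and the step I would justify with care, is the interchange of the $x$ and $\omega$ integrations (Fubini's theorem) and the pointwise use of the QFT inversion; both are secured by the fast Gaussian decay of $\tau_x\varphi$ and the membership $f\cdot\tau_x\varphi\in L^1(\R,\Hq)\cap L^2(\R,\Hq)$, which render all the integrals absolutely convergent.
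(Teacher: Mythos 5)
Your route is genuinely different from the paper's: you compute the right-hand side directly by integrating in $\omega$ first, invoking Lemma \ref{SF2} and the pointwise QFT inversion, and then evaluating a Gaussian integral in $x$. The paper instead argues weakly: it calls the right-hand side $g$, pairs it against an arbitrary $h\in L^2(\R,\Hq)$, recognizes the inner $y$-integral as $\overline{\mathcal{V}_\varphi h(x,\omega)}$, and concludes $\scal{g,h}=2^{-1}\scal{\mathcal{V}_\varphi f,\mathcal{V}_\varphi h}=\scal{f,h}$ by the Moyal formula. Your formal computation is consistent with this and your constant bookkeeping is correct ($2^{-1/4}\cdot 2^{1/2}\cdot 2^{1/4}\cdot 2^{-1/2}=1$).

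However, there is a genuine gap at exactly the step you flag as a "subtlety": neither the Fubini interchange nor the pointwise inversion is actually secured by the hypotheses you cite. The membership $f\cdot\tau_x\varphi\in L^1(\R,\Hq)\cap L^2(\R,\Hq)$ does \emph{not} imply $\mathcal{F}_I(f\cdot\tau_x\varphi)\in L^1(\R,\Hq)$; for a generic $f\in L^2(\R,\Hq)$ this Fourier transform fails to be integrable in $\omega$, so the inner integral $\int_\R e^{2\pi I\omega y}\mathcal{F}_I(f\cdot\tau_x\varphi)(\omega)\,d\omega$ is not absolutely convergent and the identity $\overset{\sim}{\mathcal{F}_I}\bigl(\mathcal{F}_I(f\cdot\tau_x\varphi)\bigr)(y)=(f\cdot\tau_x\varphi)(y)$ cannot be applied pointwise without a summability argument. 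The Gaussian decay of $\tau_x\varphi$ controls the $x$-integration but does nothing for the $\omega$-integration, and since $|\mathcal{V}_\varphi f|$ is only bounded and square-integrable on $\R^2$ (not integrable against $e^{-\pi(y-x)^2}\,dx\,d\omega$), the double integral need not converge absolutely, so Fubini's theorem does not apply as stated. This is the standard obstruction to reading the STFT reconstruction formula as an absolutely convergent iterated integral, and it is precisely what the paper's duality argument avoids: after pairing with $h$, the relevant integral is $\int_{\R^2}\overline{\mathcal{V}_\varphi h}\,\mathcal{V}_\varphi f\,dx\,d\omega$, which converges absolutely by Cauchy--Schwarz since both factors lie in $L^2(\R^2,\Hq)$. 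To repair your argument you would either have to restrict to a dense class of $f$ for which $\mathcal{F}_I(f\cdot\tau_x\varphi)\in L^1$ and then extend by the isometry \eqref{six}, or fall back on the weak formulation — which is the paper's proof.
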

\begin{proof}
For all $y\in\R$, we set $$\displaystyle g(y)=2^{-\frac{1}{4}}\int_{\R^2} e^{2\pi I \omega y}\mathcal{V}_{\varphi}f(x,\omega)e^{-\pi(y-x)^2}dx d\omega.$$
Let $h\in L^2(\R,\Hq)$. Fubini's theorem combined with Moyal formula for QSTFT leads to
\[ \begin{split}
 \displaystyle  \scal{g,h}_{L^2(\R, \mathbb{H})} & = \int_\R \overline{h(y)}g(y)dy \\
&=2^{-\frac{1}{4}}\int_{\R^3} \overline{h(y)}e^{2\pi I \omega y}\mathcal{V}_{\varphi}f(x,\omega)e^{-\pi(y-x)^2}dx d\omega dy\\
&=2^{-1} \sqrt{2}\int_{\R^2} \left(\overline{\int_\R e^{-2\pi I \omega y} 2^{\frac{1}{4}}e^{-\pi(y-x)^2}h(y)dy}\right)\mathcal{V}_{\varphi}f(x,\omega)dx d\omega \\
&= 2^{-1} \int_{\R^2} \overline{\mathcal{V}_{\varphi}h(x,\omega)}\mathcal{V}_{\varphi}f(x,\omega)dx d\omega \\
&=  2^{-1} \scal{\mathcal{V}_{\varphi}f,\mathcal{V}_{\varphi}h}_{L^2(\R^2)} \\
&= \scal{f,h}_{L^2(\R, \mathbb{H})}. \\
\end{split}
\]
Hence, we have $$\displaystyle f(y)=g(y)=2^{-\frac{1}{4}}\int_{\R^2} e^{2\pi I \omega y}\mathcal{V}_{\varphi}f(x,\omega)e^{-\pi(y-x)^2}dx d\omega.$$

This ends the proof.
\end{proof}
We note that the QSTFT admits a left side inverse that we can compute as follows
\begin{thm}
Let $\varphi$ denote the Gaussian window $ \varphi(t)= 2^{1/4} e^{- \pi t^2}$ and let us consider the operator $\mathcal{A}_\varphi:L^2(\R^2,\Hq)\longrightarrow L^2(\R,\Hq)$ defined for any $F\in L^2(\R^2,\Hq)$  by
$$\displaystyle \mathcal{A}_{\varphi}(F)(y)= 2^{\frac{3}{4}}\int_{\R^2} e^{2\pi I \omega y}F(x,\omega)e^{-\pi(y-x)^2}dx d\omega, \textbf{  } \forall y\in\R.$$

Then, $\mathcal{A}_\varphi$ is the adjoint of $\mathcal{V}_{\varphi}$. Moreover, the following identity holds \begin{equation}
\label{twelve}
 \mathcal{V}_\varphi^*\mathcal{V}_\varphi=2Id.
\end{equation}
\end{thm}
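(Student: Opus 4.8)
The plan is to establish the statement in two stages: first verify that $\mathcal{A}_\varphi$ is the Hilbert-space adjoint of $\mathcal{V}_\varphi$, and then read off the operator identity \eqref{twelve} from the Moyal formula \eqref{five}. For the adjoint I would fix $f\in L^2(\R,\Hq)$ and $F\in L^2(\R^2,\Hq)$ and expand the pairing $\scal{f,\mathcal{A}_\varphi F}_{L^2(\R,\Hq)}=\int_\R\overline{\mathcal{A}_\varphi F(y)}f(y)\,dy$ directly from the definition of $\mathcal{A}_\varphi$. The first move is to carry the quaternionic conjugation inside the integral: the factors $2^{3/4}$ and $e^{-\pi(y-x)^2}$ are real scalars and are therefore unaffected, whereas $\overline{e^{2\pi I\omega y}F(x,\omega)}=\overline{F(x,\omega)}\,e^{-2\pi I\omega y}$, so that
\begin{equation*}
\scal{f,\mathcal{A}_\varphi F}=2^{\frac{3}{4}}\int_\R\int_{\R^2}\overline{F(x,\omega)}\,e^{-2\pi I\omega y}e^{-\pi(y-x)^2}f(y)\,dx\,d\omega\,dy.
\end{equation*}

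Next I would apply Fubini's theorem to exchange the $y$-integration with the $(x,\omega)$-integration, keeping $\overline{F(x,\omega)}$ on the left so that it does not disturb the noncommutative product $e^{-2\pi I\omega y}f(y)$. The inner integral that remains is $\int_\R e^{-2\pi I\omega y}e^{-\pi(y-x)^2}f(y)\,dy$, which by Lemma \ref{SF2} (in the explicit form appearing in its proof, where $\varphi(t-x)=2^{1/4}e^{-\pi(t-x)^2}$ is a real scalar) equals $2^{-\frac{3}{4}}\mathcal{V}_\varphi f(x,\omega)$. The compensating powers of $2$ cancel, leaving exactly $\int_{\R^2}\overline{F(x,\omega)}\,\mathcal{V}_\varphi f(x,\omega)\,dx\,d\omega=\scal{\mathcal{V}_\varphi f,F}_{L^2(\R^2,\Hq)}$, which proves $\mathcal{A}_\varphi=\mathcal{V}_\varphi^*$.

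Once the adjoint is identified, the operator identity follows formally. For every $f,g\in L^2(\R,\Hq)$ the defining property of the adjoint together with the Moyal formula \eqref{five} gives
\begin{equation*}
\scal{f,\mathcal{V}_\varphi^*\mathcal{V}_\varphi g}_{L^2(\R,\Hq)}=\scal{\mathcal{V}_\varphi f,\mathcal{V}_\varphi g}_{L^2(\R^2,\Hq)}=2\scal{f,g}_{L^2(\R,\Hq)}.
\end{equation*}
Since this holds for all $f$, the vectors $\mathcal{V}_\varphi^*\mathcal{V}_\varphi g$ and $2g$ pair identically against every $f$, whence $\mathcal{V}_\varphi^*\mathcal{V}_\varphi g=2g$ and therefore $\mathcal{V}_\varphi^*\mathcal{V}_\varphi=2\,\mathrm{Id}$.

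The only genuinely delicate point is the bookkeeping in the noncommutative quaternionic setting. I must be careful that conjugation reverses the order of $e^{2\pi I\omega y}$ and $F$, that $\overline{F(x,\omega)}$ remains to the left throughout the Fubini exchange, and that the real (scalar) nature of the Gaussian and of the powers of $2$ lets them pass freely through the products. Matching the residual kernel integral to $\mathcal{V}_\varphi f$ via Lemma \ref{SF2} is precisely what produces the factor $2^{-\frac{3}{4}}$, and checking that it cancels the $2^{\frac{3}{4}}$ in the definition of $\mathcal{A}_\varphi$ is the one scalar computation I would carry out explicitly.
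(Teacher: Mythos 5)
Your proposal is correct and follows essentially the same route as the paper: both identify $\mathcal{A}_\varphi$ as the adjoint by expanding the pairing, applying Fubini, and recognizing the inner $y$-integral as (a conjugate of) $\mathcal{V}_\varphi$ via the explicit kernel form of Lemma \ref{SF2}, with the same careful left-placement of $\overline{F}$ in the noncommutative product. The only cosmetic difference is in the final step: the paper deduces $\mathcal{V}_\varphi^*\mathcal{V}_\varphi=2\,\mathrm{Id}$ from the reconstruction formula, whereas you deduce it directly from the Moyal formula \eqref{five} and the adjoint relation --- an equivalent (and arguably slightly cleaner) one-line argument, since the reconstruction formula is itself a consequence of Moyal.
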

\begin{proof}
Let $F\in L^2(\R^2,\Hq)$ and $h\in L^2(\R,\Hq)$. We use some calculations similar to the previous result and get
\[ \begin{split}
 \displaystyle  \scal{\mathcal{A}_\varphi(F),h}_{L^2(\R, \Hq)} & = \int_\R \overline{h(y)}\mathcal{A}_\varphi(F)(y)dy \\
&=2^{\frac{3}{4}}\int_{\R^3} \overline{h(y)}e^{2\pi I \omega y}F(x,\omega)e^{-\pi(y-x)^2}dx d\omega dy\\
&=\int_{\R^2} \sqrt{2} \left(\overline{\int_\R e^{-2\pi I \omega y} 2^{\frac{1}{4}}e^{-\pi(y-x)^2}h(y)dy}\right)F(x,\omega)dx d\omega \\
&= \int_{\R^2} \overline{\mathcal{V}_{\varphi}h(x,\omega)}F(x,\omega)dx d\omega \\
&= \scal{F,\mathcal{V}_{\varphi}h}_{L^2(\R^2, \Hq)}. \\
\end{split}
\]

In particular, this shows that $$\displaystyle \mathcal{A}(\varphi)(F)=\mathcal{V}_\varphi^*(F), \textbf{  } \forall F\in L^2(\R^2, \Hq).$$
From reconstruction formula we obtain \eqref{twelve}.

\end{proof}
\begin{rem}
We note that the identity $\mathcal{V}_\varphi^*\mathcal{V}_\varphi= 2Id$ provides another proof for the fact that QSTFT is an isometric operator and the adjoint $\mathcal{V}_\varphi^*$ defines a left inverse.
\end{rem}

\subsection{The eigenfunctions of the 1D quaternion Fourier transform}
Through the 1D QSTFT we can prove in another way that the eigenfunctions of the 1D quaternion Fourier transform are given by the Hermite functions.
\begin{prop}
The Hermite functions $ h_k^{2 \pi}(t)$ are eigenfunctions of the 1D quaternion Fourier transform :
$$ \mathcal{F}_I (h_k^{2 \pi})(t)=2^{-1/2} (-I)^k h_k^{2 \pi}(t), \qquad t \in \mathbb{R}.$$
\end{prop}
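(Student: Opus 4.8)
The plan is to read off the eigenvalue from the explicit form of the QSTFT of the Hermite functions, combined with the intertwining relation \eqref{PS1} between $\mathcal{V}_\varphi f$ and $\mathcal{V}_\varphi \mathcal{F}_I(f)$, and then to invoke the injectivity of $\mathcal{V}_\varphi$. The preceding Remark, after multiplying by the positive real constant $\|h_k^{2\pi}\|_2^2$ (which commutes with everything), gives the closed formula
$$\mathcal{V}_\varphi h_k^{2\pi}(x,\omega)=c_k\, e^{-I\pi x\omega}e^{-\frac{\pi}{2}|q|^2}\bar q^{\,k},\qquad q=x+I\omega,$$
for a real constant $c_k>0$ whose exact value will be irrelevant since it cancels.

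First I would apply \eqref{PS1} with $f=h_k^{2\pi}$ and left-multiply by $\tfrac{1}{\sqrt2}e^{2\pi I\omega x}$ to solve for the transform of $\mathcal{F}_I(h_k^{2\pi})$, obtaining
$$\mathcal{V}_\varphi\mathcal{F}_I(h_k^{2\pi})(\omega,-x)=\frac{1}{\sqrt 2}\,e^{2\pi I\omega x}\,\mathcal{V}_\varphi h_k^{2\pi}(x,\omega).$$
Then I would substitute the explicit formula, set $p=\omega-Ix$ (the quaternion attached to the evaluation point $(\omega,-x)$), and write $a=\omega$, $b=-x$ so that $p=a+Ib$. The two exponential prefactors combine as $e^{2\pi I\omega x}e^{-I\pi x\omega}=e^{I\pi x\omega}=e^{-I\pi ab}$, while $x^2+\omega^2=|p|^2$; the only genuinely quaternionic point is the identity $q=Ip$, which gives $\bar q=\overline{Ip}=\bar p\,\bar I=-I\bar p$ and hence $\bar q^{\,k}=(-I)^k\bar p^{\,k}$, all factors lying in $\C_I$ and thus commuting. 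Collecting terms yields
$$\mathcal{V}_\varphi\mathcal{F}_I(h_k^{2\pi})(a,b)=\frac{(-I)^k}{\sqrt 2}\,c_k\,e^{-I\pi ab}e^{-\frac{\pi}{2}|p|^2}\bar p^{\,k}=\frac{(-I)^k}{\sqrt 2}\,\mathcal{V}_\varphi h_k^{2\pi}(a,b).$$

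Finally, since $\varphi$ is real and scalar the map $\mathcal{V}_\varphi$ is right $\Hq$-linear, and because $h_k^{2\pi}$ is real-valued the scalar $2^{-1/2}(-I)^k$ may be moved through it, so that $\tfrac{(-I)^k}{\sqrt2}\mathcal{V}_\varphi h_k^{2\pi}=\mathcal{V}_\varphi\big(2^{-1/2}(-I)^k h_k^{2\pi}\big)$. The isometry \eqref{six} makes $\mathcal{V}_\varphi$ injective, whence $\mathcal{F}_I(h_k^{2\pi})=2^{-1/2}(-I)^k h_k^{2\pi}$, which is the claim. I expect the main obstacle to be the noncommutative bookkeeping: one must verify that every factor produced along the way lies in $\C_I$ (so that $(-I)^k$ may be freely relocated past $\bar p^{\,k}$ and the exponentials), and that the right-linearity and injectivity of $\mathcal{V}_\varphi$ genuinely upgrade equality of the two transforms to equality of the underlying functions.
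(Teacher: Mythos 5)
Your proposal is correct and follows essentially the same route as the paper: it combines the explicit closed form of $\mathcal{V}_\varphi h_k^{2\pi}$ (coming from Lemma 4.4 of \cite{DG}, restated in the Remark) with the intertwining relation \eqref{PS1}, extracts the factor $(-I)^k$ from the identity $\overline{q}=-I\overline{p}$ with all quantities in $\C_I$, and concludes by injectivity of $\mathcal{V}_\varphi$. The only cosmetic differences are the choice of evaluation point ($(\omega,-x)$ versus the paper's $(x,-\omega)$) and citing the isometry \eqref{six} rather than \eqref{twelve} for injectivity.
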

\begin{proof}
By identity \eqref{two} and \cite[Lemma 4.4]{DG} we have
\begin{eqnarray}
\label{eleven}
\mathcal{V}_{\varphi}(h_k^{2 \pi})(x, - \omega)&=&e^{I \pi x \omega} \mathcal{B}_{\Hq}^{S}(h_k^{2 \pi}) \bigl( \frac{q}{\sqrt{2}} \bigl) e^{- \frac{ \pi |q|^2}{2}}\\ \nonumber
&=& e^{I \pi x \omega} 2^{1/4} 2^{k/2} (2 \pi)^k 2^{-k/2} q^k e^{- \frac{ \pi |q|^2}{2}}\\
&=& e^{I \pi x \omega} 2^{1/4} (2 \pi)^k q^k e^{- \frac{ \pi |q|^2}{2}}. \nonumber
\end{eqnarray}
Recalling that $q= x+I \omega$ and using \eqref{PS1} we obtain
\begin{eqnarray*}
\mathcal{V}_{\varphi} \mathcal{F}_I (h_k^{2 \pi})(x,- \omega)&=&2^{-1/2} e^{2 \pi I \omega x} \mathcal{V}_{\varphi}h_k^{2 \pi} (\omega, x)\\
&=& 2^{-1/2} e^{2 \pi I \omega x}  e^{-I \pi \omega x} \mathcal{B}_{\Hq}^S (h_k^{2 \pi}) \biggl( \frac{\omega -Ix}{ \sqrt{2}} \biggl) e^{- \frac{|q|^2 \pi}{2}}\\
&=& 2^{-1/2} e^{\pi I \omega x}   \mathcal{B}_{\Hq}^S (h_k^{2 \pi}) \biggl( \frac{-Iq}{ \sqrt{2}} \biggl) e^{- \frac{|q|^2 \pi}{2}}\\
&=& 2^{-1/2} e^{\pi I \omega x} 2^{1/4} 2^{k/2} (2 \pi)^k (-I)^k 2^{-k/2} q^k e^{- \frac{|q|^2 \pi}{2}}\\
&=& 2^{-1/2} (-I)^k e^{ I \pi \omega x} 2^{1/4} (2 \pi)^k  q^k e^{- \frac{|q|^2 \pi}{2}}.
\end{eqnarray*}
Combining with \eqref{eleven}
$$ \mathcal{V}_{\varphi} \mathcal{F}_I (h_k^{2 \pi})(x,- \omega)= 2^{-1/2} (-I)^k \mathcal{V}_{\varphi}h_k^{2 \pi}(x, - \omega).$$
From \eqref{twelve} we know that $V_{\varphi}$ is injective, hence we have the thesis.
\end{proof}

\subsection{Reproducing kernel property}
The inversion formula gives us the possibility to write the 1D QSTFT using the reproducing kernel associated to the quaternion Gabor space, introduced in \cite{AF}, with a Gaussian window that is defined by

$$\mathcal{G}^{\varphi}_{\Hq}:=\lbrace{\mathcal{V}_\varphi f, \textbf{  } f\in L^2(\R,\Hq)}\rbrace.$$
\begin{thm}
Let $f$ be in $L^2( \mathbb{R, \mathbb{H}})$ and $ \varphi(t)=2^{1/4} e^{- \pi t^2}$. If
$$ \mathbb{K}_{\varphi}(\omega, x; \omega', x')=\int_{\mathbb{R}} e^{-2 \pi I \omega' t} \varphi(t-x') \overline{e^{-2 \pi I \omega t} \varphi(t-x)}  \, dt,$$
then $ \mathbb{K}_{\varphi}(\omega, x; \omega', x')$ is the reproducing kernel i.e.
$$ \mathcal{V}_{\varphi}f(x', \omega')= \int_{\mathbb{R}^2} \mathbb{K}_{\varphi}(\omega, x; \omega', x') \mathcal{V}_{\varphi}f(x, \omega) \, dx d \omega.$$
\end{thm}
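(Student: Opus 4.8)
The plan is to obtain the reproducing identity directly from the integral form of the QSTFT in Lemma~\ref{SF2} combined with the reconstruction formula established above. First I would rewrite the left-hand side by means of \eqref{four}, using that the window $\varphi$ is real-valued, as
\[
\mathcal{V}_{\varphi}f(x', \omega')=\sqrt{2}\int_{\mathbb{R}} e^{-2\pi I \omega' y}\,\varphi(y-x')\,f(y)\, dy .
\]
Then I would insert into the right-hand side the reconstruction formula $f(y)=2^{-1/4}\int_{\mathbb{R}^2} e^{2\pi I \omega y}\mathcal{V}_{\varphi}f(x,\omega)\,e^{-\pi(y-x)^2}\,dx\,d\omega$, which converts the expression into an iterated integral in the variables $(y,x,\omega)$.

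The next step is purely bookkeeping of the scalar factors together with the identification $e^{-\pi(y-x)^2}=2^{-1/4}\varphi(y-x)$. The three constants then collapse, since $2^{1/2}\cdot 2^{-1/4}\cdot 2^{-1/4}=1$, leaving
\[
\mathcal{V}_{\varphi}f(x', \omega')=\int_{\mathbb{R}}\int_{\mathbb{R}^2} e^{-2\pi I \omega' y}\,\varphi(y-x')\,e^{2\pi I \omega y}\,\mathcal{V}_{\varphi}f(x,\omega)\,\varphi(y-x)\,dx\,d\omega\,dy .
\]
I would then invoke Fubini's theorem to carry out the $y$-integration first; its applicability is the routine technical point, justified by the Gaussian decay of the two window factors and by $\mathcal{V}_{\varphi}f\in L^2(\mathbb{R}^2,\mathbb{H})$ (the isometry \eqref{six}), which together guarantee absolute convergence of the triple integral.

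The crux of the argument, and the only place where the quaternionic setting requires care, is the rearrangement of the integrand so that the $y$-independent factor $\mathcal{V}_{\varphi}f(x,\omega)$ can be extracted to the \emph{right} of the $y$-integral. Because $\varphi$ takes real values, the factor $\varphi(y-x)$ commutes with everything and may be moved next to $\varphi(y-x')$; what then remains to the left of $\mathcal{V}_{\varphi}f(x,\omega)$ is the product $e^{-2\pi I \omega' y}\varphi(y-x')\varphi(y-x)e^{2\pi I \omega y}$, all of whose $y$-dependence sits in the two exponentials. Hence $\mathcal{V}_{\varphi}f(x,\omega)$ factors out on the right and the bracketed $y$-integral is exactly
\[
\int_{\mathbb{R}} e^{-2\pi I \omega' y}\,\varphi(y-x')\,\varphi(y-x)\,e^{2\pi I \omega y}\,dy=\mathbb{K}_{\varphi}(\omega, x; \omega', x'),
\]
where the last equality uses $\overline{e^{-2\pi I \omega t}\varphi(t-x)}=\varphi(t-x)e^{2\pi I \omega t}$, again a consequence of the reality of $\varphi$. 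This yields the claimed formula. I expect the main obstacle to be precisely this non-commutative reordering: the identity works only because the window is real-valued, so that $\mathcal{V}_{\varphi}f(x,\omega)$ can be slid past the scalar Gaussians while the non-real exponentials remain trapped inside the $y$-integral; tracking these orderings correctly is the delicate part, whereas the convergence issues are standard.
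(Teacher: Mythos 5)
Your proposal is correct and follows essentially the same route as the paper's proof: rewrite $\mathcal{V}_{\varphi}f(x',\omega')$ via Lemma \ref{SF2}, substitute the reconstruction formula for $f$, apply Fubini, absorb the constants into the two copies of $\varphi$, and recognize the inner $t$-integral as $\mathbb{K}_{\varphi}$. Your explicit attention to why $\mathcal{V}_{\varphi}f(x,\omega)$ can be extracted to the right of the $y$-integral (reality of the Gaussians, $y$-independence of the extracted factor) is a point the paper passes over silently, but the argument is the same.
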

\begin{proof}
By Lemma \ref{SF2} and the reconstruction formula we have
\begin{eqnarray*}
\mathcal{V}_{\varphi}f(x', \omega')&=&2^{3/4} \int_{\mathbb{R}} e^{-2 \pi I \omega' t} f(t) e^{- \pi(t-x')^2} \, dt\\
&=& 2^{3/4} \int_{\mathbb{R}} e^{-2 \pi I \omega' t} e^{- \pi(t-x')^2} 2^{-\frac{1}{4}}\cdot \\ \nonumber
&& \cdot \biggl( \int_{\mathbb{R}^2} e^{2 \pi I \omega t}e^{- \pi (t-x)^2} \mathcal{V}_{\varphi} f (x, \omega) \, d x \, d \omega \biggl) \, dt\\
&=& \sqrt{2} \int_{\mathbb{R}^3} e^{-2 \pi I (\omega'- \omega) t} e^{- \pi(t-x')^2} e^{- \pi (t-x)^2} \cdot \\ \nonumber
&& \cdot  \mathcal{V}_{\varphi} f (x, \omega)  \, dx \, d \omega  \, dt.
\end{eqnarray*}
Using Fubini's theorem we have
\begin{eqnarray*}
\mathcal{V}_{\varphi}f(x', \omega')&=& \sqrt{2} \int_{\mathbb{R}^2}   \biggl(\int_{\mathbb{R}} e^{-2 \pi I (\omega'- \omega) t} e^{- \pi(t-x')^2} e^{- \pi (t-x)^2} \, dt  \biggl) \cdot \\ \nonumber
&& \cdot \mathcal{V}_{\varphi} f (x, \omega)\, dx \, d \omega  \\
&=& \int_{\mathbb{R}^2}  \biggl(\int_{\mathbb{R}} e^{-2 \pi I \omega' t} 2^{1/4} e^{- \pi(t-x')^2} \overline{2^{1/4} e^{-2 \pi I \omega t} e^{- \pi (t-x)^2}}  \, dt  \biggl) \cdot \\
&& \cdot \mathcal{V}_{\varphi} f (x, \omega)\, dx \, d \omega  \\
&=& \int_{\mathbb{R}^2}  \biggl(\int_{\mathbb{R}} e^{-2 \pi I \omega' t} \varphi(t-x') \overline{e^{-2 \pi I \omega t} \varphi(t-x)}  \, dt  \biggl) \cdot \\ \nonumber
&& \cdot \mathcal{V}_{\varphi} f (x, \omega) \, dx \, d \omega\\
&=& \int_{\mathbb{R}^2} \mathbb{K}_{\varphi}(\omega, x; \omega', x') \mathcal{V}_{\varphi}f(x, \omega) \, dx d \omega.
\end{eqnarray*}
\end{proof}

\subsection{Lieb's uncertainty principle for QSTFT}
The QSTFT follows the Lieb's uncertainty principle with some weak differences comparing to the classical complex case. Indeed, we first study the weak uncertainty principle which is the subject of this result
\begin{thm}[Weak uncertainty principle]
Let $f\in L^2(\R,\Hq)$ be a unit vector (i.e $||f||=1$), $U$ an open set of $\R^2$ and $\varepsilon \geq 0$ such that $$\displaystyle\int_U |\mathcal{V}_\varphi f(x,\omega)|^2dxd\omega \geq 1-\varepsilon.$$
Then, we have $$|U|\geq \frac{1-\varepsilon}{2},$$
where $|U|$ denotes the Lebesgue measure of $U$.

\end{thm}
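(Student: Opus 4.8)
The plan is to reduce the statement to a single uniform pointwise bound on the QSTFT and then integrate it over $U$. The decisive observation is that $|\mathcal{V}_\varphi f(x,\omega)|$ is bounded by a constant independent of $(x,\omega)$, after which the measure estimate is immediate from the hypothesis.

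First I would establish the sup bound $|\mathcal{V}_\varphi f(x,\omega)|\le \sqrt{2}$ for every $(x,\omega)\in\R^2$. The starting point is the inner-product representation \eqref{PS2}, namely $\mathcal{V}_\varphi f(x,\omega)=\sqrt{2}\,\scal{f,M_\omega\tau_x\varphi}$. Applying the Cauchy--Schwarz inequality in the right quaternionic Hilbert space $L^2(\R,\Hq)$ gives
$$|\mathcal{V}_\varphi f(x,\omega)|\le \sqrt{2}\,\|f\|_{L^2(\R,\Hq)}\,\|M_\omega\tau_x\varphi\|_{L^2(\R,\Hq)}.$$
Next I would note that both time-frequency operators act isometrically on $L^2(\R,\Hq)$: translation $\tau_x$ is an isometry by a change of variable, while modulation preserves norms because $|e^{2\pi I\omega t}|=1$ forces $|M_\omega\psi(t)|=|\psi(t)|$ pointwise. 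Hence $\|M_\omega\tau_x\varphi\|_{L^2(\R,\Hq)}=\|\varphi\|_{L^2(\R,\Hq)}=1$, where the normalization $\|\varphi\|_2=1$ is exactly the one already used in the second proof of the Moyal formula. Since $\|f\|=1$, this yields $|\mathcal{V}_\varphi f(x,\omega)|^2\le 2$ uniformly.

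Then I would combine this bound with the concentration assumption by integrating over $U$:
$$1-\varepsilon\le \int_U |\mathcal{V}_\varphi f(x,\omega)|^2\,dx\,d\omega\le 2\int_U dx\,d\omega=2|U|,$$
which rearranges directly to $|U|\ge \dfrac{1-\varepsilon}{2}$, as claimed.

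The routine parts here are genuinely routine, so the only point deserving care — and the step I expect to be the main obstacle — is justifying the Cauchy--Schwarz inequality and the isometry of modulation in the \emph{non-commutative} quaternionic framework, where the inner product is $\Hq$-valued and scalars multiply on the right. Both facts are standard in the theory of right quaternionic Hilbert spaces, but I would state them explicitly, since the factor $\sqrt{2}$ in \eqref{PS2} and the absolute-value identity $|\,\overline{M_\omega\tau_x\varphi(t)}\,f(t)\,|=|\varphi(t-x)|\,|f(t)|$ must be handled with the quaternionic modulus rather than a complex one. Once these are in place, the uncertainty estimate follows with the constant $1/2$ reflecting the normalization $\|\mathcal{V}_\varphi f\|_{L^2(\R^2,\Hq)}=\sqrt{2}\,\|f\|_{L^2(\R,\Hq)}$ from \eqref{six}.
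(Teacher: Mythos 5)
Your argument is correct, and it reaches the same uniform bound $\|\mathcal{V}_\varphi f\|_\infty^2\le 2$ as the paper, but by a different route. The paper derives the pointwise bound from the Segal--Bargmann side: writing $|\mathcal{V}_\varphi f(x,\omega)|=|\mathcal{B}_\Hq^S f(p)|e^{-\pi|p|^2}$ with $p=\bar q/\sqrt 2$, it invokes the growth estimate for functions in the slice Fock space (Prop.~4.3 of \cite{DG}, essentially the reproducing-kernel bound $|F(p)|\le \sqrt2\,\|F\|e^{\pi|p|^2}$ for $\nu=2\pi$) together with the isometry $\|\mathcal{B}_\Hq^S f\|=\|f\|$. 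You instead work directly in the time domain, using the representation $\mathcal{V}_\varphi f(x,\omega)=\sqrt2\,\scal{f,M_\omega\tau_x\varphi}$ from \eqref{PS2}, Cauchy--Schwarz, and the fact that $\tau_x$ and $M_\omega$ preserve the $L^2(\R,\Hq)$ norm with $\|\varphi\|_2=1$. The two bounds are morally the same (the Fock-space growth estimate is itself Cauchy--Schwarz against the reproducing kernel), but your version is more self-contained: it avoids the external citation and only needs the elementary inequality $|\scal{f,g}|\le\|f\|\,\|g\|$, which in this right quaternionic Hilbert space follows at once from $\left|\int_\R \overline{g(t)}f(t)\,dt\right|\le\int_\R|g(t)||f(t)|\,dt$ and the multiplicativity of the quaternionic modulus --- the point you rightly flag as the only one needing explicit care. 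The concluding integration over $U$ is identical in both proofs.
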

\begin{proof}
We note that using Definition of QSTFT and \cite[Prop. 4.3]{DG} we obtain \[ \begin{split}
 \displaystyle  |\mathcal{V}_{\varphi}f(x,\omega)| & =  |\mathcal{B}_\Hq^Sf(\bar{q}/\sqrt{2})|e^{-\frac{|q|^2}{2}\pi} \\
& = |\mathcal{B}_\Hq^Sf(p)|e^{-\pi|p|^2}; \text{  } p=\bar{q}/\sqrt{2}\\
&\leq \sqrt{2}||f||_{L^2(\R)}. \\
\end{split}
\]
Thus, by hypothesis we get $$\displaystyle 1-\varepsilon \leq \int_U |\mathcal{V}_\varphi f(x,\omega)|^2dxd\omega  \leq ||\mathcal{V}_\varphi f||_\infty ^2 |U|\leq 2|U|.$$
Hence, we have $$|U|\geq \frac{1-\varepsilon}{2}.$$
\end{proof}
\begin{thm}[Lieb's inequality]
Let $f\in L^2(\R,\Hq)$ and $2 \leq p <\infty$. Then, we have $$\displaystyle \int_{\R^2}|\mathcal{V}_{\varphi}f(x,\omega)|^pdx d\omega\leq \frac{2^{p+1}}{p}||f||_{L^2(\R,\Hq)}^p$$

\end{thm}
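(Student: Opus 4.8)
The plan is to deduce the quaternionic Lieb inequality from the classical complex Lieb inequality by exploiting the relation established earlier between the QSTFT and the complex short-time Fourier transform. The key structural fact I would use is that formula \eqref{PS2} expresses $\mathcal{V}_\varphi f(x,\omega) = \sqrt{2}\,\langle f, M_\omega \tau_x \varphi\rangle$, so that $|\mathcal{V}_\varphi f(x,\omega)| = \sqrt{2}\,|V_\varphi f(x,\omega)|$ where $V_\varphi$ is (up to the $\sqrt{2}$ normalization) the short-time Fourier transform computed in the complex plane $\C_I$. Since for a fixed imaginary unit $I$ all the inner products live in $\C_I \cong \C$, the pointwise modulus of the quaternionic transform agrees, up to the constant $\sqrt{2}$, with that of a classical complex STFT.

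First I would recall the classical Lieb inequality (see \cite{G}): for the complex STFT with Gaussian window one has, for $2 \leq p < \infty$,
$$\int_{\R^2} |V_\varphi f(x,\omega)|^p \, dx\, d\omega \leq \left(\frac{2}{p}\right) \|f\|_{L^2(\R)}^p \|\varphi\|_{L^2(\R)}^p,$$
after inserting the normalization $\|\varphi\|_2^2 = 1$ that holds for our Gaussian $\varphi(t) = 2^{1/4}e^{-\pi t^2}$. Then I would write $f = f_1 + f_2 J$ with $f_1, f_2$ taking values in $\C_I$ (the symplectic decomposition used throughout section 5), observe that $\mathcal{V}_\varphi f = \mathcal{V}_\varphi f_1 + (\mathcal{V}_\varphi f_2)J$ because $\mathcal{V}_\varphi$ is $\C_I$-linear on the right, and hence $|\mathcal{V}_\varphi f(x,\omega)|^2 = |\mathcal{V}_\varphi f_1(x,\omega)|^2 + |\mathcal{V}_\varphi f_2(x,\omega)|^2$. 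The point is that each $\mathcal{V}_\varphi f_\ell$ is $\sqrt{2}$ times a genuine complex STFT, so the classical inequality applies to each slice.

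Next I would combine these pieces. Using $|\mathcal{V}_\varphi f|^p = \bigl(|\mathcal{V}_\varphi f_1|^2 + |\mathcal{V}_\varphi f_2|^2\bigr)^{p/2}$ together with the elementary bound $(a+b)^{p/2} \leq 2^{p/2-1}(a^{p/2} + b^{p/2})$ for $a,b \geq 0$ and $p \geq 2$, I would integrate and apply the classical Lieb bound to $\mathcal{V}_\varphi f_1$ and $\mathcal{V}_\varphi f_2$ separately. Each contributes a factor $(\sqrt{2})^p \cdot (2/p) \|f_\ell\|_2^p$, and collecting the constants $2^{p/2-1}$, $2^{p/2}$, and $2/p$ against $\|f_1\|_2^p + \|f_2\|_2^p \leq (\|f_1\|_2^2 + \|f_2\|_2^2)^{p/2} = \|f\|_2^p$ should yield the stated constant $\tfrac{2^{p+1}}{p}$.

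The main obstacle I anticipate is bookkeeping the constants correctly so that the various powers of $2$ coalesce into exactly $2^{p+1}/p$: one must track the $\sqrt{2}$ from the QSTFT-STFT identity, the $2^{p/2-1}$ from the convexity estimate, and the $2/p$ and $\|\varphi\|_2$ normalization from the classical inequality, while being careful about whether the slice-norms recombine as $\|f_1\|_2^p + \|f_2\|_2^p$ or as the larger $(\|f_1\|_2^2+\|f_2\|_2^2)^{p/2}$. An alternative route that sidesteps the classical result would be to interpolate directly: the case $p=2$ is exactly the Moyal identity \eqref{five}, giving $\|\mathcal{V}_\varphi f\|_2^2 = 2\|f\|_2^2$, and the $L^\infty$ bound $\|\mathcal{V}_\varphi f\|_\infty \leq \sqrt{2}\|f\|_2$ is the estimate already derived in the weak uncertainty principle above; a Riesz-Thorin type argument between these endpoints would produce a bound of the form $\|\mathcal{V}_\varphi f\|_p \lesssim 2^{1/2}\|f\|_2$, though recovering the sharp $1/p$ decay in the constant genuinely requires the rearrangement argument underlying Lieb's theorem, so I expect the slicing reduction to the complex case to be the cleaner path.
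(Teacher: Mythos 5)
Your proposal is correct and follows essentially the same route as the paper: split $f=f_1+f_2J$ over $\C_I$, apply the classical complex Lieb inequality to each component, and recombine via a pointwise convexity estimate. You are in fact more careful than the paper's own write-up about the factor $\sqrt{2}$ relating $\mathcal{V}_\varphi$ to the classical complex STFT, and your bookkeeping (using $(a+b)^{p/2}\leq 2^{p/2-1}(a^{p/2}+b^{p/2})$ and $\|f_1\|_2^p+\|f_2\|_2^p\leq\|f\|_2^p$) actually lands on the slightly sharper constant $2^p/p$, which of course implies the stated bound $2^{p+1}/p$.
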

\begin{proof}

Let $I,J\in\Sq$ be such that $I$ is orthogonal to $J$. Then, for $f\in L^2(\R,\Hq)$, there exist $f_1,f_2\in L^2(\R,\C_I)$ such that $$f(t)=f_1(t)+f_2(t)J, \textbf{  } \forall t\in \R$$
and for which the classical Lieb's inequality \cite{L} holds , i.e:
$$\displaystyle \int_{\R^2}|V_{\varphi}f_l(x,\omega)|^pdx d\omega\leq \frac{2}{p}||f_l||_{L^2(\R,\C_I)}^p; \textbf{  } l=1,2.$$
In particular, by definition of QSTFT we have
$$\mathcal{V}_\varphi f(x,\omega)=\mathcal{V}_\varphi f_1(x,\omega)+V_\varphi f_2(x,\omega)J, \textbf{ } \forall (x,\omega)\in \R^2.$$
Thus,
\[ \begin{split}
 \displaystyle |\mathcal{V}_\varphi f(x,\omega)|^p & \leq  \left(|V_\varphi f_1(x,\omega)|+|V_\varphi f_2(x,\omega)|\right )^p\\
& \leq 2^{p-1}\left(|V_\varphi f_1(x,\omega)|^p+|V_\varphi f_2(x,\omega)|^p\right ).
\end{split}
\]
We use the classical Lieb's inequality on each component combined with the fact that $||f_l||_p\leq ||f||_p$ for $l=1,2$ and get
\[ \begin{split}
 \displaystyle \int_{\R^2}|\mathcal{V}_\varphi f(x,\omega)|^pdxd\omega  & \leq \frac{2^p}{p} \left(||f_1||_{L^2(\R)}^p+||f_2||_{L^2(\R)}^p\right )\\
& \leq \frac{2^{p+1}}{p}||f||_{L^2(\R,\Hq)}^p.
\end{split}
\]
This ends the proof.
\end{proof}
The next result improves the weak uncertainty principle in the sense that it gives a best sharper estimate for $|U|$.
\begin{thm}
Let $f\in L^2(\R,\Hq)$ be a unit vector, $U$ an open set of $\R^2$ and $\varepsilon \geq 0$ such that $$\displaystyle\int_U |\mathcal{V}_\varphi f(x,\omega)|^2dxd\omega \geq 1-\varepsilon.$$
Then, we have $$|U|\geq c_p (1-\varepsilon)^{\frac{p}{p-2}},$$
where $|U|$ denotes the Lebesgue measure of $U$ and $c_p=\left(\frac{2^{p+1}}{p}\right)^{-\frac{2}{p-2}}$.
\begin{proof}
Let $f\in L^2(\R,\Hq)$ be such that $||f||_{L^2(\R,\Hq)}=1$. We first apply  Holder inequality with exponents $\displaystyle q=\frac{p}{2}$ and $\displaystyle q'=\frac{p}{p-2}$. Then, using Lieb's inequality for QSTFT we get
\[ \begin{split}
 \displaystyle \int_{U}|\mathcal{V}_\varphi f(x,\omega)|^2 dxd\omega  & = \int_{\R^2}|\mathcal{V}_\varphi f(x,\omega)|^2\chi_{_{U}}(x,\omega) dxd\omega\\
& \leq \left(\int_{\R^2}|\mathcal{V}_\varphi f(x,\omega)|^p dxd\omega \right)^{\frac{2}{p}}|U|^{\frac{p-2}{p}}  \\
& \leq \left(\frac{2^{p+1}}{p}\right)^{\frac{2}{p}}|U|^{\frac{p-2}{p}}. \\
\end{split}
\]
Hence, by hypothesis we obtain $$|U|\geq c_p (1-\varepsilon)^{\frac{p}{p-2}}$$ where $c_p=\left(\frac{2^{p+1}}{p}\right)^{-\frac{2}{p-2}}$.

\end{proof}

\end{thm}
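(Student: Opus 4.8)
The plan is to combine the Lieb inequality for the QSTFT just established with a single application of Hölder's inequality, exactly as in classical Gabor analysis. Since $f$ is a unit vector, the Lieb bound furnishes a uniform control of the $L^p$-norm of $\mathcal{V}_\varphi f$ over all of $\R^2$, while the standing hypothesis controls the $L^2$-mass concentrated on $U$; interpolating between these two pieces of information via Hölder will force $|U|$ to be large.

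First I would rewrite the concentration hypothesis as an integral over all of $\R^2$ against the indicator $\chi_{U}$, namely $\int_U |\mathcal{V}_\varphi f|^2 = \int_{\R^2} |\mathcal{V}_\varphi f|^2 \chi_{U}$. Next, to this integral I would apply Hölder's inequality with the conjugate exponents $q=\frac{p}{2}$ and $q'=\frac{p}{p-2}$, which are genuinely conjugate precisely because $2\leq p<\infty$, so that the factor $|\mathcal{V}_\varphi f|^2$ is raised to the power $q$ producing $|\mathcal{V}_\varphi f|^p$, while $\chi_{U}^{q'}=\chi_{U}$ integrates to $|U|$. This yields
$$\int_U |\mathcal{V}_\varphi f(x,\omega)|^2\,dx\,d\omega \leq \left(\int_{\R^2} |\mathcal{V}_\varphi f(x,\omega)|^p\,dx\,d\omega\right)^{\frac{2}{p}} |U|^{\frac{p-2}{p}}.$$

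Then I would invoke the Lieb inequality for the QSTFT proved above, which bounds the $L^p$-integral by $\frac{2^{p+1}}{p}||f||_{L^2(\R,\Hq)}^p=\frac{2^{p+1}}{p}$ since $||f||=1$, so the previous display becomes $\int_U |\mathcal{V}_\varphi f|^2 \leq \bigl(\tfrac{2^{p+1}}{p}\bigr)^{2/p}|U|^{(p-2)/p}$. Finally, combining with the hypothesis $\int_U |\mathcal{V}_\varphi f|^2\geq 1-\varepsilon$ and solving the resulting inequality for $|U|$, that is, raising both sides to the power $\frac{p}{p-2}$, produces $|U|\geq c_p(1-\varepsilon)^{\frac{p}{p-2}}$ with $c_p=\bigl(\tfrac{2^{p+1}}{p}\bigr)^{-\frac{2}{p-2}}$.

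There is no serious obstacle here, because the substantive analytic content has already been absorbed into the Lieb inequality for the QSTFT, whose proof reduced the quaternionic estimate to the classical complex Lieb inequality through the symplectic splitting $f=f_1+f_2 J$. The only points requiring genuine care are algebraic: verifying that $\frac{p}{2}$ and $\frac{p}{p-2}$ are Hölder-conjugate, and carrying out the exponent arithmetic so that solving for $|U|$ delivers the constant $c_p$ with exactly the stated negative exponent $-\frac{2}{p-2}$. One minor caveat worth recording is the boundary behaviour as $p\to 2^+$, where both $q'$ and the exponent $\frac{p}{p-2}$ blow up; the statement is to be read for each fixed $p$ with $p>2$, which is exactly what makes the Hölder step meaningful.
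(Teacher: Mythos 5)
Your proposal is correct and follows essentially the same route as the paper's own proof: expressing the concentration integral against $\chi_{U}$, applying H\"older with exponents $\frac{p}{2}$ and $\frac{p}{p-2}$, invoking the Lieb inequality with $\|f\|=1$, and solving for $|U|$. Your remark that the argument requires $p>2$ for the H\"older step to be meaningful is a sensible clarification the paper leaves implicit.
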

\section{Concluding remarks}
In this paper, we studied a quaternion short-time Fourier transform (QSTFT) with a Gaussian window. This window function corresponds to the first normalized Hermite function given by $\psi_0(t)=\varphi(t)=2^{1/4} e^{- \pi t^2}$. Based on the quternionic Segal-Bargmann transform we proved several results  including different versions of Moyal formula, reconstruction formula, Lieb's principle, etc.  A more general problem in this framework is to consider a QSTFT associated to some generic quaternion valued window $\psi$. For a given quaternion $q=x+I \omega$ we plan to investigate in our future research  works the properties of the QSTFT defined for any $f\in L^2(\R,\Hq)$ by
$$ \mathcal{V}_\psi f(x, \omega)= \int_{\mathbb{R}}e^{-2 \pi I t \omega}  \overline{\psi(t-x)}  f(t) dt.$$
In particular, studying such transforms with normalized Hermite functions \\ $\lbrace \psi_n(t) \rbrace_{n\geq 0}$ that are real valued windows will be related to the theory of slice poly-analytic functions on quaternions considered in \cite{ADS}.
\newline
\newline
\newline
\textbf{Acknowledgements}
\newline
We would like to thank Prof. Irene Sabadini for reading an earlier version of this paper and for her interesting comments. The second author acknowledges the support of the project INdAM Doctoral Programme in Mathematics and/or Applications Cofunded by Marie Sklodowska-Curie Actions, acronym: INdAM-DP-COFUND-2015, grant number: 713485.

\hspace{2mm}

\noindent
Antonino De Martino,
Dipartimento di Matematica \\ Politecnico di Milano\\
Via Bonardi n.~9\\
20133 Milano\\
Italy

\noindent
\emph{email address}: antonino.demartino@polimi.it\\
\emph{ORCID iD}: 0000-0002-8939-4389

\vspace*{5mm}
\noindent
Kamal Diki,
Dipartimento di Matematica \\ Politecnico di Milano\\
Via Bonardi n.~9\\
20133 Milano\\
Italy

\noindent
\emph{email address}: kamal.diki@polimi.it\\
\emph{ORCID iD}: 0000-0002-4359-7535

\end{document}